\documentclass[11pt]{article}
\usepackage{amsmath}
\usepackage{amsthm}
\usepackage{latexsym}
\usepackage[noblocks]{authblk}
\usepackage{hyperref}
\usepackage{enumerate}
\usepackage{amssymb,amsmath}
\usepackage{multimedia}
\usepackage{subfigure}
\usepackage{authblk}
\usepackage{mathrsfs} 
\usepackage{epsfig}
\usepackage{mathdots}
\usepackage{caption}
\usepackage{color}
\usepackage{kotex}
\usepackage{pgfpages,tikz,tikz-cd,pgfkeys,pgfplots}
\usepackage{graphicx}



\usetikzlibrary{arrows,positioning,matrix,fit,backgrounds,shapes}

\usetikzlibrary{fit,matrix}
\tikzset{
mN/.style = {
    draw=#1, semithick, inner sep=0pt}
             }

\oddsidemargin  0.5cm \evensidemargin 0cm \topmargin 0.5cm

\usetikzlibrary{arrows,positioning,matrix,fit,backgrounds,shapes,shapes.geometric, intersections}
\usetikzlibrary{shapes.callouts,decorations.pathmorphing, patterns}
\usetikzlibrary{decorations.markings} 
\tikzstyle{vertex}=[circle, draw, inner sep=0pt, minimum size=6pt] 


\setlength{\textwidth}{16cm}

\newtheorem{theorem}{Theorem}[section]
\newtheorem{lemma}[theorem]{Lemma}
\newtheorem{proposition}[theorem]{Proposition}
\newtheorem{corollary}[theorem]{Corollary}

\theoremstyle{definition}

\newtheorem{remark}[theorem]{Remark}

\definecolor{LemonChiffon}{rgb}{100, 98, 80}
\definecolor{myblue}{rgb}{0,0.4,0.8}
\definecolor{orange}{rgb}{1, 0.4, 0}
\definecolor{mygreen}{rgb}{0, 0.8, 0}
\definecolor{myred}{rgb}{204, 0, 0}
\definecolor{violet}{RGB}{0.4,0.2,1}
\definecolor{brown}{rgb}{0.6, 0.4, 0}

 \newcounter{statement}
\newcommand{\statement}[2]{%
 \begin{equation}\refstepcounter{statement}\tag{S\thestatement}\label{#1}%
  \parbox{\dimexpr\linewidth-4em}{#2}%
 \end{equation}%
}
\begin{document}

\title{Competition-common enemy graphs of degree-bounded digraphs}
\author{Myungho Choi$^{1}$, Hojin Chu$^{1}$, Suh-Ryung Kim$^{1}$ \\
 {\footnotesize $^{1}$ \textit{Department of Mathematics Education,
Seoul National University,}}\\{\footnotesize\textit{
Seoul 08826, Rep. of Korea}}\\
{\footnotesize\textit{
nums8080@snu.ac.kr, ghwls8775@snu.ac.kr, srkim@snu.ac.kr}}\\
{\footnotesize}}
\date{}\maketitle

\begin{abstract}
The competition-common enemy graph (CCE graph) of a digraph $D$ is the graph with the vertex set $V(D)$ and an edge $uv$ if and only if $u$ and $v$ have a common predator and a common prey in $D$.
If each vertex of a digraph $D$ has indegree at most $i$ and outdegree at most $j$, then $D$ is called an $\langle i,j \rangle$ digraph.
In this paper, we fully characterize  the CCE graphs of $\langle 2,2\rangle$ digraphs. 
Then we investigate the CCE graphs of acyclic $\langle 2,2 \rangle$ digraphs, and prove that any CCE graph of an acyclic $\langle 2,2 \rangle$ digraph with at most seven components is interval, and the bound is sharp.
While characterizing acyclic $\langle 2,2 \rangle$ digraphs that have interval graphs as their competition graphs, Hefner~{\it et al}. (1991) initiated the study of competition graphs of degree-bounded digraphs. 
 Recently, Lee~{\em et al}. (2017) 
and Eoh and Kim (2021) studied phylogeny graphs of degree-bounded digraphs to extend their work.
\end{abstract}

{\small
\noindent  \textbf{\textit{Keywords.}} Competition-common enemy graph, $\langle 2,2 \rangle$ CCE graph, $(2,2)$ CCE graph,  Interval graph, Chordal graph, Degree-bounded digraph
\\ \textbf{\textit{2020 Mathematics Subject Classification.}} 05C20, 05C75 }

\section{Introduction}
In this paper, all the graphs and digraphs are assumed to be simple.
For all undefined graph-theoretical terminology, we follow~\cite{Bondy2008gt}.
Given a digraph $D$,  the \textit{competition graph} of $D$ has the same vertex set as $D$ and an edge $uv$ if and only if there is a vertex $x$ such that $(u,x)$ and $(v,x)$ are arcs of $D$.
The notion of competition graphs was introduced by Cohen~\cite{Cohen1968} in connection with competition in the food web of ecosystems.
In this vein, if $(u,x)$ is an arc of a digraph, then
 $x$ and $u$ are called a {\em prey} of $u$ and a {\em predator} of $x$, respectively.
The competition graph has applications in coding, radio transmission, and modeling of complex economic systems and has been studied in various variations.
The notion of competition-common enemy graphs (CCE graphs) was introduced by Scott~\cite{Scott1987tc} as one of such variants.
The \textit{CCE graph} of $D$, denoted by $CCE(D)$, has the same vertex set as $D$ and an edge $uv$ if and only if there are vertices $x$ and $y$ such that $(y,u)$, $(y,v)$, $(u,x)$, and $(v,x)$ are arcs of $D$.
Thus the CCE graph of $D$ has an edge $uv$ if and only if $u$ and $v$ have a common prey and a common predator in $D$.
For results on CCE graphs, readers may refer to \cite{sano2015double}, \cite{Kim2007oc}, \cite{Sano2010tc}, \cite{Bak1997oc}, \cite{Fisher2001tf}, \cite{Jones1987sr}, \cite{Bak1996ot}, and \cite{furedi1998double}.

In 1991, Hefner~{\it et al.}~\cite{Hefner1991ic} introduced the notion of $(i,j)$ digraphs and studied competition graphs of $(i,j)$ digraphs, which they call {\it $(i,j)$ competition graphs}.
The \textit{$(i,j)$ digraph} is an acyclic digraph $D$ such that $d^{-}(x) \le i$ and $d^{+} (x) \le j$ for every vertex $x$ in $V(D)$, where $d^{-}(x)$ and $d^{+}(x)$ denote the indegree and outdegree of a vertex $x$, respectively.
They claimed that limitation on the indegree or outdegree of a vertex is reasonable based on the empirical results of Cohen and Briand~\cite{Cohen1984tl} which suggest that in a food web, the number of arcs incident to each species is quite small in an average sense and it is actually about $2$.
Recently, Lee~{\em et al}.~\cite{lee2017phylogeny}
and Eoh and Kim \cite{eoh2021chordal} studied phylogeny graphs of degree-bounded digraphs to extend their work.

Under degree restrictions, Hefner~{\it et al.}~\cite{Hefner1991ic} characterized the $(2,2)$ competition graphs and also studied the $(2,2)$ digraphs whose competition graphs are interval graphs as follows.
Here, an {\it interval graph} is a type of graph where each vertex represents an interval on the real number line, and there is an edge between two vertices if and only if their corresponding intervals intersect.
\begin{theorem}[\cite{Hefner1991ic}]\label{thm:competition}
A graph is a $(2,2)$ competition graph if and only if each component is an isolated vertex, a path, or a cycle, and the number of isolated vertices is at least $2$ if every component is a cycle of length $> 3$ and at least $1$ otherwise.
\end{theorem}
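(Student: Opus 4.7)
I plan to prove both directions of the biconditional.

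For the necessity direction, suppose $G = C(D)$ for a $(2,2)$ digraph $D$ on $n$ vertices. I would first observe that $\Delta(G) \le 2$: for any $v$, its neighbors in $C(D)$ come from the other predators of its preys, and $v$ has at most $2$ preys ($d^+(v) \le 2$), each of which has at most one other predator ($d^-(\cdot) \le 2$). Hence every component of $G$ is an isolated vertex, a path, or a cycle. Next, since $D$ is acyclic it has at least one sink, and any sink has no prey and is therefore isolated in $C(D)$, giving $\iota \ge 1$, where $\iota$ denotes the number of isolated vertices of $G$. For the stronger bound $\iota \ge 2$ when every non-isolated component is a cycle, I would double-count: let $t$ be the number of indegree-$2$ vertices of $D$ and $s$ the number of sinks; each edge of $G$ requires at least one indegree-$2$ witness, so $|E(G)| \le t$, and counting arcs from both ends gives $2t \le |A(D)| \le 2(n-s)$, hence $t \le n-s$. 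In the all-cycles case $|E(G)| = n - \iota$, so $\iota \ge s$, reducing the problem to showing $s \ge 2$. Assuming $s = 1$ for contradiction forces all these inequalities to be tight, so $D$ would have exactly one source, one sink, every other vertex having indegree and outdegree both equal to $2$, and each indegree-$2$ vertex being the unique witness of a distinct edge; I would then chase arcs starting from the unique source, combine with the cycle structure of $C(D)$, and use acyclicity to reach a contradiction.

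For the sufficiency direction, given $G$ satisfying the conditions, I would build $D$ gadget by gadget. For a path $v_1 v_2 \cdots v_k$, the arcs $(v_i,v_{i+2})$ and $(v_{i+1},v_{i+2})$ for $1 \le i \le k-2$, together with $(v_{k-1},z)$ and $(v_k,z)$ for a single fresh sink $z$, yield a $(2,2)$ digraph realizing the path plus one isolated vertex. For a single cycle $v_1 v_2 \cdots v_m v_1$, a similar ``shift-by-two'' scheme gives the interior edges witnessed by the cycle vertices themselves, while two fresh sinks take care of the two edges that would otherwise violate acyclicity. For graphs with multiple cycle components, I would use a ``cross-witnessing'' trick in which vertices of one cycle serve as witnesses for edges of another cycle under an appropriately chosen topological order, keeping the total number of fresh isolated vertices down to the lower bound forced by the necessity argument. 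Finally, I would take the disjoint union of these gadgets to realize a general $G$ meeting the stated conditions.

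The main obstacle is the tight count $\iota \ge 2$ in the all-cycles case: the rigidity forced by $s = 1$ is not visibly contradictory at first glance, and extracting the forbidden substructure requires a careful analysis of how the unique source and sink sit within the cycles of $C(D)$. On the sufficiency side, the subtle point is that a naive concatenation of single-cycle gadgets overshoots the allowed isolated-vertex count, so one must devise cross-witnessing constructions that exploit the degree slack in the ``middle'' of the digraph.
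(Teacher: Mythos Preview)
This theorem is not proved in the present paper. It is Theorem~1.1, quoted verbatim from Hefner, Jones, Kim, Lundgren, and Roberts (1991) as background for the paper's own results on CCE graphs; the paper supplies no argument for it, and its proofs (Sections~3 and~4) are devoted to Theorems~1.3 and~1.5. So there is no in-paper proof to compare your proposal against.

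On the proposal itself: the necessity sketch is sound as far as it goes --- the bound $\Delta(G)\le 2$, the sink giving an isolated vertex, and the double count $|E(G)|\le t\le n-s$ forcing $\iota\ge s$ are all correct. But you explicitly leave the decisive step, that $s\ge 2$ in the all-cycles case, as an unproved claim; ``chase arcs from the unique source and use acyclicity'' is a hope, not an argument, and this is exactly the point where the characterization has content. Note also that the theorem as stated distinguishes cycles of length $>3$ from triangles, whereas your counting makes no such distinction; you should check whether you are proving more than the theorem asserts or whether a case split is missing. On the sufficiency side, the path and single-cycle gadgets are standard and fine, but the ``cross-witnessing trick'' for multiple cycles is asserted rather than described, and hitting the isolated-vertex bound exactly is where the real construction work lies. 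In short, the plan is reasonable, but both directions have a substantive gap precisely at the points you yourself flag as the obstacles.
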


\begin{theorem}[\cite{Hefner1991ic}]\label{thm:forbidden digraph}
	There is a forbidden subdigraph characterization of $(2,2)$ digraphs whose competition graphs are interval. 
\end{theorem}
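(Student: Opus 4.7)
The plan is to reduce the problem using Theorem~\ref{thm:competition}. That theorem states that every component of the competition graph of a $(2,2)$ digraph is an isolated vertex, a path, or a cycle. Since isolated vertices, paths, and triangles are interval, while cycles of length at least $4$ are not even chordal, the competition graph $C(D)$ of a $(2,2)$ digraph $D$ is interval if and only if no component of $C(D)$ is isomorphic to $C_n$ for some $n \ge 4$. The task therefore reduces to characterizing, by forbidden subdigraph configurations, exactly those $(2,2)$ digraphs that admit a $C_n$ (with $n \ge 4$) component in their competition graphs.

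I would construct, for each $n \ge 4$, a specific family $\mathcal{H}_n$ of $(2,2)$ digraph configurations capable of producing a $C_n$ component. If $u_1 u_2 \cdots u_n u_1$ is such a cycle in $C(D)$, then for each $i$ the pair $\{u_i, u_{i+1}\}$ must share a prey $v_i$; the outdegree bound $d^+(u_i) \le 2$ then forces the two out-arcs of $u_i$ to go precisely to $v_{i-1}$ and $v_i$, while the indegree bound $d^-(v_i) \le 2$ forces the in-neighbors of $v_i$ to be exactly $u_i$ and $u_{i+1}$. This rigidity yields a manageable list of configurations, one essentially for each $n \ge 4$ (up to minor variations in the ``peripheral'' arcs that do not alter the structure induced on $\{u_1,\ldots,u_n\}$ in the competition graph). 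The forward direction is then immediate: any such configuration in $D$ yields a $C_n$ component in $C(D)$, making it non-interval. For the reverse direction, one shows that if no member of $\bigcup_{n \ge 4} \mathcal{H}_n$ appears as a subdigraph of $D$, then the description from Theorem~\ref{thm:competition} collapses to components that are isolated vertices, paths, or triangles, all of which are interval.

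The principal obstacle is verifying that the forbidden configurations are genuinely preserved under the notion of subdigraph containment used. Extra arcs of $D$ outside a would-be copy of $H \in \mathcal{H}_n$ could introduce additional common preys among the $u_i$'s, creating chords in the competition cycle that destroy it as a component of $C(D)$. The $\langle 2,2\rangle$ degree restriction, together with the acyclicity built into the definition of a $(2,2)$ digraph, sharply constrains such interference, but a careful case analysis is needed to confirm that the list $\bigcup_{n \ge 4} \mathcal{H}_n$ is both necessary and sufficient, and in particular that the ``peripheral'' arcs can be encoded cleanly into the definition of each $H \in \mathcal{H}_n$. Once this robustness is established, the forbidden subdigraph characterization follows.
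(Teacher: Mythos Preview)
The paper does not prove Theorem~\ref{thm:forbidden digraph}; it is quoted from Hefner~\emph{et al.}~\cite{Hefner1991ic} as background, so there is no proof in the present paper to compare your proposal against.

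That said, the paper does contain (in the introduction, just after statement~$(\star)$) a two-sentence argument for why the analogous statement holds for $(2,2)$ CCE graphs, and that argument bears directly on what you call the ``principal obstacle.'' The point is this: by Theorem~\ref{thm:competition} (or its CCE analogue, Proposition~\ref{prop:degree}), every vertex in the competition graph of a $(2,2)$ digraph has degree at most~$2$. Hence if $H$ is a hole in $C(D')$ for some subdigraph $D'\subseteq D$, each vertex of $H$ already has degree~$2$ there; passing to $C(D)\supseteq C(D')$ cannot add any further edge incident to a vertex of $H$, so no chord appears and $H$ remains a hole in $C(D)$. Your worry about extra arcs creating chords is therefore ruled out automatically by the degree bound, and no case analysis of ``peripheral'' arcs is needed. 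With this monotonicity in hand, the forbidden-subdigraph characterization is immediate: the forbidden family is the set of minimal $(2,2)$ digraphs whose competition graphs contain a hole, which is the family $\bigcup_{n\ge 4}\mathcal{H}_n$ you describe. So your outline is correct in spirit, but the obstacle you identify dissolves once you invoke the degree-$2$ bound on $C(D)$ rather than arguing arc by arc.
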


We obtain a result (Theorem~\ref{thm:sumup}) on CCE graphs similar to Theorem~\ref{thm:competition} by loosening the condition ``acyclic".

The \textit{$\langle i,j \rangle$ digraph} is a digraph satisfying $d^{-}(x) \le i$ and $d^{+} (x) \le j$ for every vertex $x$.
Given a graph $G$, we say that $G$ is an {\it $\langle i,j\rangle$ CCE graph} if it is the CCE graph of an $\langle i,j\rangle$ digraph.
In a similar vein, we say that a graph is an {\it $(i,j)$ CCE graph} if it is the CCE graph of an $(i,j)$ digraph.

From now on, we identify an isolated vertex with a trivial path.

\begin{theorem}\label{thm:sumup}
	The following are equivalent.
\begin{enumerate}[(a)]
	\item A graph $G$ is a $\langle 2,2 \rangle$ CCE graph.
	\item Each component of $G$ is a path or a cycle. Especially, when the number of path components in $G$ is one, the path component must be trivial.
	\end{enumerate}
	\end{theorem}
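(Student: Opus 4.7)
The plan is to establish the equivalence by proving each direction. For $(a) \Rightarrow (b)$, I first bound the degree of every vertex in $G = CCE(D)$ by $2$: given $v \in V(G) = V(D)$, any CCE-neighbor $u$ of $v$ arises from some common predator $y \in N^-(v)$ with $u \in N^+(y) \setminus \{v\}$, and since $|N^-(v)| \leq 2$ and $|N^+(y) \setminus \{v\}| \leq 1$, we obtain at most $2$ CCE-neighbors of $v$. Hence every component of $G$ is a path or a cycle. For the remaining condition, I assume toward a contradiction that $G$ has exactly one path component $P = u_1 u_2 \cdots u_n$ with $n \geq 2$ and all other components are cycles. For each edge $u_i u_{i+1}$, I fix a common predator $y_i$ with $N^+(y_i) = \{u_i, u_{i+1}\}$ (a ``source'') and a common prey $x_i$ with $N^-(x_i) = \{u_i, u_{i+1}\}$ (a ``sink''); the degree budget forces $N^-(u_j) = \{y_{j-1}, y_j\}$ and $N^+(u_j) = \{x_{j-1}, x_j\}$ for every internal $u_j$ with $2 \leq j \leq n-1$. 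I then locate $y_1$: it must lie in $V(P)$ or on a cycle. If $y_1 = u_j$ for some $j \geq 3$, then $\{x_{j-1}, x_j\} = \{u_1, u_2\}$, and the resulting identifications propagate along $P$ to pin down a digraph on $V(P)$ whose CCE-restriction is $C_n$ rather than $P_n$, yielding the unwanted edge $u_1 u_n$; if $y_1$ lies on a cycle, the sinks of the two cycle edges incident to $y_1$ must coincide with $u_1, u_2$, and propagating the cycle's source/sink structure eventually forces a cycle-vertex to equal a path-vertex, a contradiction.

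For $(b) \Rightarrow (a)$, I construct a $\langle 2, 2 \rangle$ digraph $D$ with $CCE(D) = G$ component by component. For each cycle $C_m = w_1 w_2 \cdots w_m w_1$ with $m \geq 3$, I realize it on its own vertex set by setting $N^+(w_i) = \{w_{i-2}, w_{i-1}\}$ (indices modulo $m$); direct computation yields common predator $w_{i+2}$ and common prey $w_{i-1}$ for each edge $w_i w_{i+1}$, and no spurious CCE edges within $V(C_m)$. For each non-trivial path component, hypothesis (b) supplies a companion path component (possibly a trivial path, i.e.\ an isolated vertex), and I use the companion's vertices as sources and sinks for the non-trivial path's edges, extending the construction for $P_3$ paired with an isolated vertex (achieved, for instance, by the arcs $w \to v_1, w \to v_2, v_1 \to v_2, v_1 \to v_3, v_2 \to v_3, v_2 \to w, v_3 \to w$) to arbitrary path lengths and pairings. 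Care is needed to avoid cross-component CCE edges; this is ensured because the cycle arcs only connect vertices within the same cycle, and the path arcs only connect path-and-companion vertices.

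The hard part will be step 2 of $(a) \Rightarrow (b)$: carefully tracking the propagation of source and sink identifications under the degree constraints. In particular, the cycle case is subtle because one must propagate sink constraints around the entire cycle and detect the contradiction arising from a cycle vertex being forced to equal a path vertex. Handling this systematically, rather than by arbitrarily long case analysis, is the central difficulty; I expect that isolating a clean ``cycle-vs-path incompatibility'' lemma will be needed to keep the argument manageable.
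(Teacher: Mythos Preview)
Your outline tracks the paper's strategy in broad strokes---the degree bound, the case split on where $y_1$ lives, and self-contained cycle realizations for $(b)\Rightarrow(a)$---but there are genuine gaps in both directions.

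For $(a)\Rightarrow(b)$, the cycle case does not terminate where you claim. Your propagation correctly forces the common prey of consecutive cycle edges to march along $u_1,u_2,u_3,\ldots$, but the contradiction that emerges is \emph{not} that a cycle vertex coincides with a path vertex. When the cycle has length $m<n$ the propagation overloads $N^+$ of the last cycle vertex; when $m>n$ one is forced back to $u_1$ and gets three in-neighbours there; but when $m=n$ the propagation closes up consistently, with every $u_i$ having both in-neighbours on the cycle and $u_1,u_n$ sharing a common predator. That alone is no contradiction. The paper now passes to $D^{\leftarrow}$ (which has the same CCE graph) and reruns the entire argument on the prey side, concluding that $u_1$ and $u_n$ also share a common \emph{prey}; only then does $u_1u_n\in E(G)$ furnish the contradiction. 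Without this reversal step, or an equivalent second pass on the $x_i$'s, your argument stalls at $m=n$ with no inconsistency in sight.

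For $(b)\Rightarrow(a)$, the pairing scheme is not well-defined and the construction is not specified. If $G$ consists of exactly three nontrivial path components of pairwise different lengths, you cannot disjointly assign each a companion, and reusing a companion creates degree conflicts. The paper resolves this by grouping the path components into blocks of size two or three and giving three separate explicit constructions---for $P_m\cup P_n$, for $2P_m\cup P_n$, and for $P_\ell\cup P_m\cup P_n$---each obtained by deleting carefully chosen arcs from a single cycle digraph on the \emph{combined} vertex set, so that the common predator and common prey for one path's edges are typically vertices of the other path. These are not routine extrapolations of your $P_3\cup P_1$ gadget, and ``use the companion's vertices as sources and sinks'' does not describe a working mechanism for, say, $P_5\cup P_3$.
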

	Given a graph,
an induced cycle of length four or more is called a {\it hole}.
Since a path or a triangle is interval and a hole is not interval,
we obtain the following.
\begin{corollary}\label{cor:interval}
	The following are equivalent.
\begin{enumerate}[(a)]
	\item A graph $G$ is an interval $\langle 2,2 \rangle$ CCE graph.
	\item Each component of $G$ is a path or a triangle. Especially, when the number of path components in $G$ is one, the path component must be trivial.
	\end{enumerate}
	\end{corollary}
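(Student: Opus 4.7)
The plan is to deduce this corollary directly from Theorem~\ref{thm:sumup}, layering in the interval condition on top of the component characterization it already provides. Since Theorem~\ref{thm:sumup} describes $\langle 2,2 \rangle$ CCE graphs entirely in terms of their components, the task reduces to asking which paths and cycles are compatible with the graph being interval.

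For the implication (a)$\Rightarrow$(b), I would assume $G$ is interval and apply Theorem~\ref{thm:sumup} to obtain that each component is a path or a cycle, and that the stated restriction on path components holds. Any cycle of length at least four is itself a hole, and interval graphs contain no hole (as noted in the excerpt just before the corollary). Thus every cycle component of $G$ must have length exactly three, i.e., must be a triangle. The path-component restriction transfers verbatim since the class of path components is unchanged.

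For (b)$\Rightarrow$(a), assume each component of $G$ is a path or a triangle, with the single-path-component restriction. Treating a triangle as a $3$-cycle, this matches condition (b) of Theorem~\ref{thm:sumup}, so $G$ is a $\langle 2,2 \rangle$ CCE graph. Since paths and triangles are themselves interval graphs and a disjoint union of interval graphs is interval, $G$ is interval.

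I do not foresee a real obstacle here: the corollary is a bookkeeping consequence of Theorem~\ref{thm:sumup} together with the two elementary facts flagged in the hint right before the statement, namely that paths and triangles are interval but holes are not. The only point worth stating explicitly is that the one-path-component clause is identical in both characterizations, so it does not require any separate verification.
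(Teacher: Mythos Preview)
Your proposal is correct and matches the paper's own justification, which is simply the sentence preceding the corollary: since paths and triangles are interval while holes are not, the corollary follows directly from Theorem~\ref{thm:sumup}. Your write-up just spells out both directions of that one-line observation.
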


By definition, an $(i,j)$ digraph is an $\langle i,j \rangle$ digraph.
Therefore $(a) \Rightarrow (b)$  of Theorem~\ref{thm:sumup} is valid for $(2,2)$ CCE graphs, that is, 
\begin{itemize}
	\item[($\star$)] if a graph is a $(2,2)$ CCE graph, then each component is a path or a cycle.
\end{itemize}
A {\it chordal graph} is a graph without holes. 
It is well known that an interval graph is a chordal graph.

Since a path or a triangle is an interval graph, it is true that a $(2,2)$ CCE graph is chordal if and only if it is interval.
Moreover, if $H$ is a hole in the CCE graph of a subdigraph of a given $(2,2)$ digraph, then ($\star$) guarantees that $H$ is still a hole in the CCE graph of a given digraph.
Therefore we may conclude that Theorem~\ref{thm:forbidden digraph} is true for $(2,2)$ CCE graphs.

We note that $(b) \not \Rightarrow (a)$ of Theorem~\ref{thm:sumup} for $(2,2)$ CCE graphs.
For example, 
a triangle satisfies part (b) of Theorem~\ref{thm:sumup} but it cannot be a $(2,2)$ CCE graph since it contains no isolated vertices.
Accordingly, we investigated which graphs satisfying part (b) of Theorem~\ref{thm:sumup} are intervals and could prove that all $(2,2)$ CCE graphs with up to seven components are interval.
By demonstrating the existence of a $(2,2)$ CCE graph with eight component that is not interval, 
we showed that our result is optimal in some aspects. 
\begin{theorem}\label{thm:interval}
Let $G$ be a $(2,2)$ CCE graph and $t$ be the number of components of $G$.
If $t\leq 7$, then $G$ is an interval graph.
Further, the inequality is tight.
\end{theorem}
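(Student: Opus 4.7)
The plan is to prove the contrapositive: a non-interval $(2,2)$ CCE graph $G$ must have at least $8$ components. By the observation $(\star)$ every component of $G$ is a path or a cycle, and since paths and triangles are interval, non-intervalness of $G$ requires some component of $G$ to be a cycle $C_n$ with $n\ge 4$. Thus it suffices to show that whenever $CCE(D)$ contains such a cycle component, it has at least $8$ components.

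\emph{Skeletal structure.} Let $C_n=v_1v_2\cdots v_n$ ($n\ge 4$, indices modulo $n$) be a cycle component of $G=CCE(D)$. For each $i$, the edge $v_iv_{i+1}$ demands a common prey $p_i$ and a common predator $q_i$ in $D$; the $(2,2)$ degree bounds together with the distinctness of $p_{i-1},p_i$ (inferred from their distinct in-neighborhoods $\{v_{i-1},v_i\}$ and $\{v_i,v_{i+1}\}$) pin down $v_i$'s out-neighborhood to be exactly $\{p_{i-1},p_i\}$ and in-neighborhood exactly $\{q_{i-1},q_i\}$. Acyclicity of $D$ supplies a topological order in which the $q_j$'s strictly precede the $v_j$'s and the $v_j$'s strictly precede the $p_j$'s; a short case analysis then shows that any coincidence of a $v_j$ with a $p_k$ or $q_k$ would produce a directed cycle in $D$. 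Hence the $3n$ vertices $\{v_i,p_i,q_i\}$ are pairwise distinct.

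\emph{Cascade count.} The only common predators of $p_i$ with any other vertex of $D$ are $v_i$ and $v_{i+1}$, whose only other out-neighbors are $p_{i-1}$ and $p_{i+1}$; hence every $G$-neighbor of $p_i$ lies in $\{p_{i-1},p_{i+1}\}$, so the $p_i$'s lie in $G$-components disjoint from both the $C_n$-component and the $q_j$-components (symmetrically for the $q_i$'s). Each edge $p_ip_{i+1}$ of $G$ demands a fresh second-layer common prey in $D$, and iteratively one obtains a cascade whose $j$-th layer is itself a disjoint union of paths and cycles by applying $(\star)$. Writing $m_j$ and $c_j$ for the size and $G$-component count of the $j$-th cascade layer, the path-or-cycle structure yields $c_j \ge m_j-m_{j+1}$; acyclicity and finiteness of $D$ force $m_j=0$ for all sufficiently large $j$; and summing telescopes to $\sum_j c_j\ge m_0=n$. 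The same bound holds for the $q$-side cascade. Combining, the total number of components of $G$ is at least $1+n+n=2n+1\ge 9$, which exceeds $7$, establishing the contrapositive.

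For the tightness assertion, an explicit $(2,2)$ digraph $D$ is constructed whose $CCE(D)$ has exactly eight components and contains a cycle of length at least $4$. The main obstacle in the lower-bound argument is the careful bookkeeping of the cascade: one must verify that cycle components at intermediate layers cannot reduce the count below the telescoping bound, and that no cascade vertex can be identified with a vertex already counted. Both issues are handled by invoking the acyclicity of $D$, which forbids any wrap-around identification across the topological layers $q\prec v\prec p\prec s\prec\cdots$.
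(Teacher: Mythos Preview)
Your argument contains a genuine error that invalidates the lower bound. You assert that the $3n$ vertices $\{v_i,p_i,q_i\}$ are pairwise distinct, and later that there are ``topological layers $q\prec v\prec p$''. Your case analysis only excludes $v_j=p_k$ and $v_j=q_k$; it does \emph{not} exclude $p_j=q_k$, and in fact this coincidence can occur. In the paper's own tightness example $D_6$ (with $n=4$), the vertex $w$ satisfies $v_1\to w$, $v_4\to w$ and $w\to v_2$, $w\to v_3$, so $w$ is simultaneously the common prey of $v_4,v_1$ and the common predator of $v_2,v_3$. There is no directed cycle here: $w$ sits strictly between the $x_i$'s and the $u_i$'s in a topological order, and the arcs through $w$ go $\{v_1,v_4\}\to w\to\{v_2,v_3\}$ without closing up. Thus the layered picture $q\prec v\prec p$ is simply false.

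This error is fatal to your count. Your cascade argument yields at least $2n+1$ components, which for $n=4$ gives $9$; but $CCE(D_6)\cong C_4\cup 7P_1$ has exactly $8$ components. So your method overcounts, and the overcount comes precisely from double-counting vertices that serve as both a $p_i$ and a $q_j$. The cascade bookkeeping (the definitions of $m_j$, $c_j$, the inequality $c_j\ge m_j-m_{j+1}$, and the claim that layers occupy disjoint component sets) is also too sketchy to verify as stated, but even granting it, the disjointness of the $p$-side and $q$-side cascades is what fails.

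The paper takes a rather different route. It passes to a \emph{minimal} digraph $D$ with $CCE(D)=G$, then argues directly that $D$ has at least three sinks and at least three sources (hence six isolated vertices in $G$), and separately proves the inequality $|N^+(C)\cup N^-(C)|\ge m+3$ (note: \emph{not} $2m$). For $\ell\ge 4$ this forces $|V(G)|\ge 2\ell+3>\ell+6$, so some component lies outside $C\cup 6P_1$, giving an eighth component. The correct lower bound is $8$, not $9$, and the tightness witness has $|N^+(C)\cap N^-(C)|=1$.
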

Our main results are summarized in Figure~\ref{fig:Venn}. 
The following is a table indicating which region represents a specific set in the figure.
\vspace{-0.5cm}
\begin{center}
\resizebox{\columnwidth}{!}{
\begin{tabular}{l|l}
\multicolumn{1}{c|}{ The region}& \multicolumn{1}{c}{represents the set of}  \\  \hline
enclosed by solid line rounded rectangle (A) & disjoint union of paths and cycles \\ 
shaded with a checkered pattern (B) & $\langle 2,2 \rangle$ CCE graphs \\ 
vertically shaded (C) &  disjoint union of exactly one nontrivial path and cycles \\ 
enclosed by dotted line rounded rectangle (D) &  $(2,2)$ CCE graphs \\ 
enclosed by dashed line rounded rectangle  (E) &graphs with at most seven components  \\ 
\end{tabular}}
\end{center}
By Theorem~\ref{thm:sumup}, the region A is divided into the regions B and C. 
By Theorem~\ref{thm:interval}, the overlapping area of D and E is contained in the region representing the set of interval graphs. 
In Section~\ref{sec:Preliminaries}, we develop useful tools to be utilized in proving our main results.
We prove Theorems~\ref{thm:sumup} and~\ref{thm:interval} in Sections~\ref{sec:Only path components} and~\ref{sect:(2,2)}, respectively.
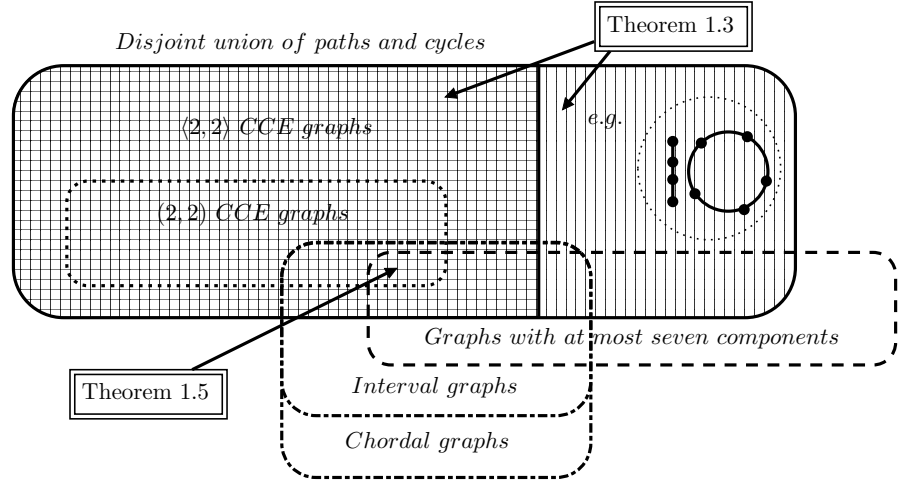
\begin{figure}
\center

  
\tikzset{
pattern size/.store in=\mcSize, 
pattern size = 5pt,
pattern thickness/.store in=\mcThickness, 
pattern thickness = 0.01pt,
pattern radius/.store in=\mcRadius, 
pattern radius = 1pt}
\makeatletter
\pgfutil@ifundefined{pgf@pattern@name@_e69yn3zqk}{
\pgfdeclarepatternformonly[\mcThickness,\mcSize]{_e69yn3zqk}
{\pgfqpoint{-\mcThickness}{-\mcThickness}}
{\pgfpoint{\mcSize}{\mcSize}}
{\pgfpoint{\mcSize}{\mcSize}}
{
\pgfsetcolor{\tikz@pattern@color}
\pgfsetlinewidth{\mcThickness}
\pgfpathmoveto{\pgfpointorigin}
\pgfpathlineto{\pgfpoint{0}{\mcSize}}
\pgfusepath{stroke}
}}
\makeatother

 
\tikzset{
pattern size/.store in=\mcSize, 
pattern size = 5pt,
pattern thickness/.store in=\mcThickness, 
pattern thickness = 0.01pt,
pattern radius/.store in=\mcRadius, 
pattern radius = 1pt}
\makeatletter
\pgfutil@ifundefined{pgf@pattern@name@_ss734l23v lines}{
\pgfdeclarepatternformonly[\mcThickness,\mcSize]{_ss734l23v}
{\pgfqpoint{0pt}{0pt}}
{\pgfpoint{\mcSize+\mcThickness}{\mcSize+\mcThickness}}
{\pgfpoint{\mcSize}{\mcSize}}
{\pgfsetcolor{\tikz@pattern@color}
\pgfsetlinewidth{\mcThickness}
\pgfpathmoveto{\pgfpointorigin}
\pgfpathlineto{\pgfpoint{\mcSize}{0}}
\pgfusepath{stroke}}}
\makeatother
\tikzset{every picture/.style={line width=0.75pt}} 
\scalebox
          {.8}{
\begin{tikzpicture}[x=0.75pt,y=0.75pt,yscale=-1,xscale=1]

\draw  [pattern=_e69yn3zqk,pattern size=3.75pt,pattern thickness=0.1pt,pattern radius=0pt, pattern color={rgb, 255:red, 0; green, 0; blue, 0}][line width=1.5]  (81.33,79.99) .. controls (81.33,62.43) and (95.57,48.19) .. (113.13,48.19) -- (542.53,48.19) .. controls (560.09,48.19) and (574.33,62.43) .. (574.33,79.99) -- (574.33,175.39) .. controls (574.33,192.95) and (560.09,207.19) .. (542.53,207.19) -- (113.13,207.19) .. controls (95.57,207.19) and (81.33,192.95) .. (81.33,175.39) -- cycle ;
\draw  [pattern=_ss734l23v,pattern size=3.75pt,pattern thickness=0.1pt,pattern radius=0pt, pattern color={rgb, 255:red, 0; green, 0; blue, 0}] (113.13,207.19) .. controls (95.57,207.23) and (81.3,193.02) .. (81.26,175.45) -- (81.06,80.05) .. controls (81.03,62.49) and (95.23,48.22) .. (112.8,48.18) -- (412.33,47.56) .. controls (412.33,47.56) and (412.33,47.56) .. (412.33,47.56) -- (412.67,206.56) .. controls (412.67,206.56) and (412.67,206.56) .. (412.67,206.56) -- cycle ;
\draw  [dash pattern={on 1.69pt off 2.76pt}][line width=1.5]  (115,134.2) .. controls (115,126.91) and (120.91,121) .. (128.2,121) -- (340.8,121) .. controls (348.09,121) and (354,126.91) .. (354,134.2) -- (354,173.8) .. controls (354,181.09) and (348.09,187) .. (340.8,187) -- (128.2,187) .. controls (120.91,187) and (115,181.09) .. (115,173.8) -- cycle ;
\draw [line width=1.5]    (497,93) -- (497,135) ;
\draw  [line width=1.5]  (507,115) .. controls (507,101.19) and (518.19,90) .. (532,90) .. controls (545.81,90) and (557,101.19) .. (557,115) .. controls (557,128.81) and (545.81,140) .. (532,140) .. controls (518.19,140) and (507,128.81) .. (507,115) -- cycle ;
\draw  [fill={rgb, 255:red, 0; green, 0; blue, 0 }  ,fill opacity=1 ] (494,96) .. controls (494,94.34) and (495.34,93) .. (497,93) .. controls (498.66,93) and (500,94.34) .. (500,96) .. controls (500,97.66) and (498.66,99) .. (497,99) .. controls (495.34,99) and (494,97.66) .. (494,96) -- cycle ;
\draw  [fill={rgb, 255:red, 0; green, 0; blue, 0 }  ,fill opacity=1 ] (494,134) .. controls (494,132.34) and (495.34,131) .. (497,131) .. controls (498.66,131) and (500,132.34) .. (500,134) .. controls (500,135.66) and (498.66,137) .. (497,137) .. controls (495.34,137) and (494,135.66) .. (494,134) -- cycle ;
\draw  [fill={rgb, 255:red, 0; green, 0; blue, 0 }  ,fill opacity=1 ] (494,109) .. controls (494,107.34) and (495.34,106) .. (497,106) .. controls (498.66,106) and (500,107.34) .. (500,109) .. controls (500,110.66) and (498.66,112) .. (497,112) .. controls (495.34,112) and (494,110.66) .. (494,109) -- cycle ;
\draw  [fill={rgb, 255:red, 0; green, 0; blue, 0 }  ,fill opacity=1 ] (494,120) .. controls (494,118.34) and (495.34,117) .. (497,117) .. controls (498.66,117) and (500,118.34) .. (500,120) .. controls (500,121.66) and (498.66,123) .. (497,123) .. controls (495.34,123) and (494,121.66) .. (494,120) -- cycle ;
\draw  [fill={rgb, 255:red, 0; green, 0; blue, 0 }  ,fill opacity=1 ] (512,97) .. controls (512,95.34) and (513.34,94) .. (515,94) .. controls (516.66,94) and (518,95.34) .. (518,97) .. controls (518,98.66) and (516.66,100) .. (515,100) .. controls (513.34,100) and (512,98.66) .. (512,97) -- cycle ;
\draw  [fill={rgb, 255:red, 0; green, 0; blue, 0 }  ,fill opacity=1 ] (541,93) .. controls (541,91.34) and (542.34,90) .. (544,90) .. controls (545.66,90) and (547,91.34) .. (547,93) .. controls (547,94.66) and (545.66,96) .. (544,96) .. controls (542.34,96) and (541,94.66) .. (541,93) -- cycle ;
\draw  [fill={rgb, 255:red, 0; green, 0; blue, 0 }  ,fill opacity=1 ] (508,129) .. controls (508,127.34) and (509.34,126) .. (511,126) .. controls (512.66,126) and (514,127.34) .. (514,129) .. controls (514,130.66) and (512.66,132) .. (511,132) .. controls (509.34,132) and (508,130.66) .. (508,129) -- cycle ;
\draw  [fill={rgb, 255:red, 0; green, 0; blue, 0 }  ,fill opacity=1 ] (553,121) .. controls (553,119.34) and (554.34,118) .. (556,118) .. controls (557.66,118) and (559,119.34) .. (559,121) .. controls (559,122.66) and (557.66,124) .. (556,124) .. controls (554.34,124) and (553,122.66) .. (553,121) -- cycle ;
\draw  [fill={rgb, 255:red, 0; green, 0; blue, 0 }  ,fill opacity=1 ] (539,139) .. controls (539,137.34) and (540.34,136) .. (542,136) .. controls (543.66,136) and (545,137.34) .. (545,139) .. controls (545,140.66) and (543.66,142) .. (542,142) .. controls (540.34,142) and (539,140.66) .. (539,139) -- cycle ;

\draw  [dash pattern={on 0.84pt off 2.51pt}] (475,113) .. controls (475,88.15) and (495.15,68) .. (520,68) .. controls (544.85,68) and (565,88.15) .. (565,113) .. controls (565,137.85) and (544.85,158) .. (520,158) .. controls (495.15,158) and (475,137.85) .. (475,113) -- cycle ;
\draw  [dash pattern={on 4.5pt off 1.5pt on 2.25pt off 1.5pt}][line width=1.5]  (250.5,181.8) .. controls (250.5,169.76) and (260.26,160) .. (272.3,160) -- (423.55,160) .. controls (435.59,160) and (445.35,169.76) .. (445.35,181.8) -- (445.35,247.2) .. controls (445.35,259.24) and (435.59,269) .. (423.55,269) -- (272.3,269) .. controls (260.26,269) and (250.5,259.24) .. (250.5,247.2) -- cycle ;
\draw  [dash pattern={on 4.5pt off 1.5pt on 2.25pt off 1.5pt}][line width=1.5]  (250.5,181.35) .. controls (250.5,169.38) and (260.2,159.68) .. (272.17,159.68) -- (423.68,159.68) .. controls (435.65,159.68) and (445.35,169.38) .. (445.35,181.35) -- (445.35,286.33) .. controls (445.35,298.3) and (435.65,308) .. (423.68,308) -- (272.17,308) .. controls (260.2,308) and (250.5,298.3) .. (250.5,286.33) -- cycle ;
\draw  [dash pattern={on 5.63pt off 4.5pt}][line width=1.5]  (305,180.2) .. controls (305,172.36) and (311.36,166) .. (319.2,166) -- (623.3,166) .. controls (631.14,166) and (637.5,172.36) .. (637.5,180.2) -- (637.5,222.8) .. controls (637.5,230.64) and (631.14,237) .. (623.3,237) -- (319.2,237) .. controls (311.36,237) and (305,230.64) .. (305,222.8) -- cycle ;
\draw [line width=1.5]    (194,240) -- (319.42,177.78) ;
\draw [shift={(323,176)}, rotate = 153.61] [fill={rgb, 255:red, 0; green, 0; blue, 0 }  ][line width=0.08]  [draw opacity=0] (6.97,-3.35) -- (0,0) -- (6.97,3.35) -- cycle    ;
\draw [line width=1.5]    (457.5,39) -- (430.02,72.89) ;
\draw [shift={(427.5,76)}, rotate = 309.04] [fill={rgb, 255:red, 0; green, 0; blue, 0 }  ][line width=0.08]  [draw opacity=0] (6.97,-3.35) -- (0,0) -- (6.97,3.35) -- cycle    ;
\draw [line width=1.5]    (412.5,48) -- (412.5,207) ;
\draw [line width=1.5]    (447.5,33) -- (357.25,66.6) ;
\draw [shift={(353.5,68)}, rotate = 339.58] [fill={rgb, 255:red, 0; green, 0; blue, 0 }  ][line width=0.08]  [draw opacity=0] (6.97,-3.35) -- (0,0) -- (6.97,3.35) -- cycle    ;

\draw (144,26) node [anchor=north west][inner sep=0.75pt]   [align=left] {$\displaystyle Disjoint\ union\ of\ paths\ and\ cycles$};
\draw (185,79) node [anchor=north west][inner sep=0.75pt]   [align=left] {$\displaystyle \langle 2,2\rangle \ CCE\ graphs$};
\draw (170,133) node [anchor=north west][inner sep=0.75pt]   [align=left] {$\displaystyle ( 2,2) \ CCE\ graphs$};
\draw (442,77) node [anchor=north west][inner sep=0.75pt]   [align=left] {$\displaystyle e.g.$};
\draw (289,278) node [anchor=north west][inner sep=0.75pt]   [align=left] {$\displaystyle Chordal\ graphs$};
\draw (293,243) node [anchor=north west][inner sep=0.75pt]   [align=left] {$\displaystyle Interval\ graphs$};
\draw (339,212) node [anchor=north west][inner sep=0.75pt]   [align=left] {$\displaystyle Graphs\ with\ at\ most\ seven\ components$};
\draw    (120,243) -- (213,243) -- (213,268) -- (120,268) -- cycle (117,240) -- (216,240) -- (216,271) -- (117,271) -- cycle ;
\draw (123,247) node [anchor=north west][inner sep=0.75pt]   [align=left] {Theorem 1.5};
\draw    (451,11) -- (544,11) -- (544,36) -- (451,36) -- cycle (448,8) -- (547,8) -- (547,39) -- (448,39) -- cycle ;
\draw (454,15) node [anchor=north west][inner sep=0.75pt]   [align=left] {Theorem 1.3};
\end{tikzpicture} }%
\caption{Visualization of Theorems~\ref{thm:sumup} and \ref{thm:interval}}
\label{fig:Venn}
\end{figure}

\section{Preliminaries}\label{sect:preliminaries}
\label{sec:Preliminaries}

\begin{proposition} \label{prop:degree}
	A $\langle 2,2 \rangle$ CCE graph has only path components and cycle components.
\end{proposition}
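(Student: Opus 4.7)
The goal is to show that every component of $G = CCE(D)$ is a path or a cycle, which---with the paper's convention that an isolated vertex counts as a trivial path---is equivalent to $\Delta(G) \le 2$. My plan is therefore to fix an arbitrary vertex $v \in V(D)$ and establish $|N_G(v)| \le 2$ directly from the indegree and outdegree restrictions placed on $D$.

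For the key step, I would observe that if $u \in N_G(v)$, then the CCE condition in particular produces a common predator $y \in V(D)$ of $u$ and $v$; that is, $y \in N^-_D(v)$ and $u \in N^+_D(y) \setminus \{v\}$. Consequently,
\[
N_G(v) \;\subseteq\; \bigcup_{y \in N^-_D(v)} \bigl(N^+_D(y) \setminus \{v\}\bigr).
\]
The hypothesis $d^-_D(v) \le 2$ bounds the number of terms in the union by two, and for each such $y$ we have $v \in N^+_D(y)$ together with $d^+_D(y) \le 2$, which forces $|N^+_D(y) \setminus \{v\}| \le 1$. Combining these estimates yields $|N_G(v)| \le 2$, as required.

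I do not anticipate any serious obstacle: only half of the CCE definition (the common-predator half) is used, and a symmetric argument employing a common prey together with the bounds $d^+_D(v) \le 2$ and $d^-_D(x) \le 2$ for $x \in N^+_D(v)$ would work equally well. The only small point to be careful about is that the display above is a containment, not an equality---some vertices in the right-hand side need not actually be neighbors of $v$ in $G$---but for the upper bound $|N_G(v)| \le 2$ this is irrelevant, and the conclusion that every component of $G$ is a path or cycle then follows from the standard structure theorem for graphs of maximum degree at most two.
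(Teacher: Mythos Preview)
Your proof is correct and follows essentially the same strategy as the paper: both arguments establish $\Delta(G)\le 2$ by a direct degree count, with the only difference being that the paper uses the common-\emph{prey} half of the CCE definition (showing distinct neighbors of $v$ yield distinct prey of $v$, hence $\deg_G(v)\le d^+_D(v)\le 2$), while you use the dual common-\emph{predator} half---a symmetry you yourself point out.
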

\begin{proof}
It is sufficient to show that the degree of each vertex in a $\langle 2,2 \rangle$ CCE graph is less than or equal to $2$. 
Let $D$ be a $\langle 2,2 \rangle$ digraph.
Take a vertex $v$ of $CCE(D)$.
Since $D$ is a $\langle 2,2 \rangle$ digraph and $CCE(D)$ is a simple graph, the ends of each edge incident to $v$ in $CCE(D)$ have a common prey which is different from a common prey of the ends of another edge incident to $v$.
This implies that $v$ has prey in $D$ at least the number of edges incident to $v$ in $CCE(D)$.
Since $v$ has at most two prey in $D$, the degree of $v$ in $CCE(D)$ is at most $2$.
\end{proof}

The following is an immediate consequence of the definitions of CCE graph and $\langle 2,2 \rangle$ digraph.
\begin{proposition}\label{prop:property_degree2}
Let $D$ be a $\langle 2,2 \rangle$ digraph and $u$ be a vertex which has degree $2$ in $CCE(D)$.
Then the following are true:
\begin{enumerate}[(i)]
	\item $d^+(u)=d^-(u)=2$;
	\item if $v$ is a neighbor of $u$ in $CCE(D)$, then $u$ and $v$ have exactly one common prey and exactly one common predator in $D$.  
\end{enumerate}
\end{proposition}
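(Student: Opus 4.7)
The plan is to adapt the counting trick already used in Proposition~\ref{prop:degree}: different edges incident to $u$ in $CCE(D)$ must be witnessed by different common prey (and different common predators), because the indegree and outdegree caps force this.

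First I would label the two neighbors of $u$ in $CCE(D)$ as $v_1$ and $v_2$, and for each $i\in\{1,2\}$ pick a common prey $x_i$ and a common predator $y_i$ of $u$ and $v_i$ in $D$. The key observation is $x_1 \ne x_2$: if $x_1=x_2=x$, then $u, v_1, v_2$ would all be predators of $x$, contradicting $d^-(x)\le 2$ since $v_1\ne v_2$. By the symmetric argument $y_1\ne y_2$. Hence $u$ has at least two distinct prey and at least two distinct predators, and combined with $d^+(u)\le 2$ and $d^-(u)\le 2$ this gives $d^+(u)=d^-(u)=2$, proving (i).

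For (ii), let $v$ be a neighbor of $u$ in $CCE(D)$ and $w$ denote the other neighbor. From (i), the set of prey of $u$ is exactly $\{x_1,x_2\}$ for some two vertices. Any common prey of $u$ and $v$ lies in this set, so I only need to rule out that both $x_1$ and $x_2$ are prey of $v$. Suppose, for contradiction, that $v$ is a predator of both $x_1$ and $x_2$. Since $w$ shares a common prey with $u$, that prey is $x_1$ or $x_2$; say it is $x_1$. Then $u$, $v$, and $w$ are three distinct predators of $x_1$, contradicting $d^-(x_1)\le 2$. So $u$ and $v$ have exactly one common prey. The dual argument, using outdegree bounds on the common predators, shows $u$ and $v$ have exactly one common predator, establishing (ii).

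No step looks like a serious obstacle; the only subtlety is making sure the neighbor $w$ is genuinely distinct from $v$ so that the indegree/outdegree cap produces a contradiction, which follows immediately from $u$ having degree exactly $2$ in $CCE(D)$.
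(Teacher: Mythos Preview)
Your proof is correct and matches the paper's intended approach: the paper does not write out a proof here, stating only that the proposition is an immediate consequence of the definitions, and your argument is precisely the elaboration of that claim via the same counting trick used in Proposition~\ref{prop:degree} (distinct edges at $u$ force distinct common prey and predators because of the in/outdegree cap of $2$).
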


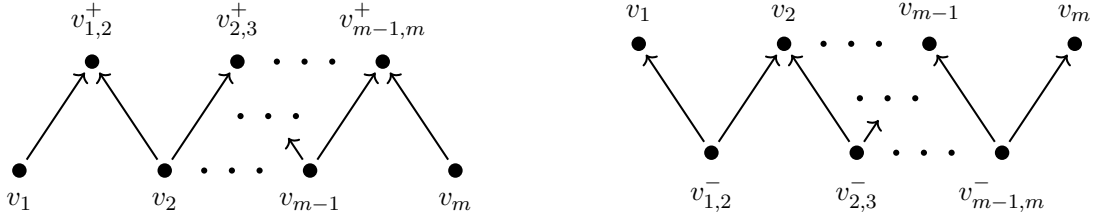
\begin{figure}
	\begin{center}
	{
	\resizebox{0.42\textwidth}{!}{%
	\scalebox{0.8}{
	\begin{tikzpicture}[scale=1]
	\tikzset{mynode/.style={inner sep=2pt,fill,outer sep=2.3pt,circle}}
	\node [mynode] (u1) at (-5,1.5) [label=above:$v^+_{1,2}$] {};
	\node [mynode] (u2) at (-3,1.5) [label=above :$v^+_{2,3}$] {};
	\node [mynode] (u3) at (-1,1.5) [label=above :$v^+_{m-1,m}$] {};
	\node [mynode] (v1) at (-6,0) [label=below :$v_1$] {};
	\node [mynode] (v2) at (-4,0) [label=below :$v_2$] {};
	\node [mynode] (v3) at (-2,0) [label=below :$v_{m-1}$] {};
	\node [mynode] (v4) at (0,0) [label=below :$v_m$] {};
	\draw[->, thick] (v1) edge (u1);
	\draw[->, thick] (v2) edge (u1);
	\draw[->, thick] (v2) edge (u2);
	\draw[->, thick] (v3) edge (u3);
	\draw[->, thick] (v4) edge (u3);
\draw[->, thick] (v3) edge (-2.3,0.45);
	\path (u2) -- (u3) node [font=\Huge, midway] {$\dots$};
	\path (v2) -- (v3) node [font=\Huge, midway] {$\dots$};
	\path (-3.5, 0.75) -- (-1.5, 0.75) node [font=\Huge, midway] {$\dots$};
	\end{tikzpicture}}
	}
	}
	\hspace{1cm}
	{
	\resizebox{0.42\textwidth}{!}{%
	\scalebox{0.8}{
	\begin{tikzpicture}[scale=1]
	\tikzset{mynode/.style={inner sep=2pt,fill,outer sep=2.3pt,circle}}
	\node [mynode] (u1) at (-5,0) [label=below:$v^-_{1,2}$] {};
	\node [mynode] (u2) at (-3,0) [label=below :$v^-_{2,3}$] {};
	\node [mynode] (u3) at (-1,0) [label=below :$v^-_{m-1,m}$] {};
	\node [mynode] (v1) at (-6,1.5) [label=above :$v_1$] {};
	\node [mynode] (v2) at (-4,1.5) [label=above :$v_2$] {};
	\node [mynode] (v3) at (-2,1.5) [label=above :$v_{m-1}$] {};
	\node [mynode] (v4) at (0,1.5) [label=above :$v_m$] {};
	\draw[->, thick] (u1) edge (v1);
	\draw[->, thick] (u1) edge (v2);
	\draw[->, thick] (u2) edge (v2);
	\draw[->, thick] (u3) edge (v3);
	\draw[->, thick] (u3) edge (v4);
    \draw[->, thick] (u2) edge (-2.7,0.45);
	\path (u2) -- (u3) node [font=\Huge, midway] {$\dots$};
	\path (v2) -- (v3) node [font=\Huge, midway] {$\dots$};
	\path (-3.5, 0.75) -- (-1.5, 0.75) node [font=\Huge, midway] {$\dots$};
	\end{tikzpicture}}
	}
	}
	\end{center}
	\caption{Two subdigraphs of $D$ determined by $P_{v,m}$ or $C_{v,m}$ for an integer $m \geq 3$ where $D$ is a $\langle 2, 2\rangle$ digraph whose CCE graph contains $P_{v,m}$ or $C_{v,m}$}
	\label{fig:securing prey_0}
	\end{figure}

\begin{proposition} \label{prop:two_degree_property}
Let $G$ be the $CCE$ graph of a $\langle 2,2 \rangle$ digraph $D$.
If two prey of a vertex are not adjacent in $G$,
then each of them has degree at most one in $G$.
\end{proposition}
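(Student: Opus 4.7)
The plan is to argue by contradiction using a careful count of predators and prey. Suppose the statement fails, so there is a vertex $w$ in $D$ with two prey $a$ and $b$ such that $a$ and $b$ are non-adjacent in $G$, yet (without loss of generality) $a$ has degree at least $2$ in $G$. The goal is to find too few common-predator candidates to accommodate two neighbors of $a$.

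First I would apply Proposition~\ref{prop:property_degree2}(i) to $a$ to conclude that $d^-(a) = 2$. Write $a$'s predators as $p_1 = w$ and $p_2$. Every neighbor $x$ of $a$ in $G$ must share a common predator with $a$, and since the only predators of $a$ are $p_1$ and $p_2$, such an $x$ must be a prey of $p_1$ or of $p_2$ distinct from $a$ itself.

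Next I would use the outdegree bound in two cases. Since $d^+(w) \le 2$ and $(w,a), (w,b)$ are already arcs of $D$, the only prey of $p_1 = w$ other than $a$ is $b$. But $a$ and $b$ are non-adjacent in $G$ by hypothesis, so $b$ is not a neighbor of $a$. Hence no neighbor of $a$ in $G$ arises through a common predator $p_1$. On the other hand, $d^+(p_2) \le 2$ implies that $p_2$ has at most one prey different from $a$, contributing at most one neighbor of $a$ in $G$. Combining, $a$ has at most one neighbor in $G$, contradicting our assumption. The same argument applied to $b$ yields the bound on the degree of $b$.

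There is no real obstacle here: the proof is a bookkeeping argument whose only subtlety is remembering that Proposition~\ref{prop:property_degree2} forces $d^-(a) = 2$ exactly (so every neighbor of $a$ routes through $p_1$ or $p_2$) and then observing that the hypothesis ``$a, b$ not adjacent'' rules out the contribution of $p_1 = w$ entirely.
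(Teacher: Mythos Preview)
Your argument is correct and follows essentially the same line as the paper's proof: both assume (without loss of generality) that one prey, say $a$, has degree at least~$2$, use Proposition~\ref{prop:property_degree2}(i) to pin down $N^-(a)=\{w,p_2\}$, and then observe that each neighbor of $a$ in $G$ must be a prey of $w$ or of $p_2$ other than $a$ itself. The only cosmetic difference is that the paper packages the last step through Proposition~\ref{prop:property_degree2}(ii) (forcing $w$ to be the common predator of $a$ with one of its neighbors, hence that neighbor lies in $N^+(w)=\{a,b\}$), whereas you do the bookkeeping directly from the outdegree bound; one small omission is that you should cite Proposition~\ref{prop:degree} when passing from ``degree at least~$2$'' to ``degree exactly~$2$'' before invoking Proposition~\ref{prop:property_degree2}(i).
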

\begin{proof}
Let $u$ and $v$ be the two prey of $w$ in $D$.
To show the contrapositive of the statement,
assume that one of $u$ and $v$ has degree at least $2$ in $G$.
Without loss of generality, we may assume that $u$ has degree at least $2$ in $G$.
Then $u$ has degree $2$ in $G$ by Proposition~\ref{prop:degree}.
Since $w \in N^-(u)$, by Proposition~\ref{prop:property_degree2}(ii), $w$ is a common predator of $u$ and one of neighbors of $u$ in $G$.
Thus, since $N^+(w)=\{u,v\}$, $u$ and $v$ are adjacent in $G$.
\end{proof}

Most of graphs considered in this paper are paths or cycles since a $\langle 2,2 \rangle$ CCE graph consist of only path components and cycle components by Proposition~\ref{prop:degree}. 
We denote a path of length $m-1$ and a cycle of length $m$ by $P_m$ and $C_m$, respectively, for a positive integer $m$.
Especially, we denote the path $v_1v_2 \cdots v_m$ and the cycle $v_1v_2 \cdots v_mv_1$ by $P_{v,m}$ and $C_{v,m}$, respectively.
For a given $C_{v,m}$, we identify $v_{m+j}$ with $v_j$ for any integer $j$.

Let $m$ be an integer greater than or equal to $3$.
By Proposition~\ref{prop:property_degree2}, given a cycle $C_{v,m}$,
$v_i$ and $v_{i+1}$ have a unique common prey and a unique common predator for each $1\leq i\leq m$. 
Given a path $P_{v,m}$,
any interior vertex on $P_{v,m}$ has degree $2$ and so $v_i$ and $v_{i+1}$ have a unique common prey and a unique common predator for each $1\leq i\leq m-1$. 
For notational convenience, we denote the unique common prey of $v_i$ and $v_{i+1}$ by $v^+_{i,i+1}$ and the unique common predator of $v_i$ and $v_{i+1}$ by $v^-_{i,i+1}$ when $\{v_i,v_{i+1}\} \subseteq V(P_{v,m})$ (resp.\ $\{v_i,v_{i+1}\} \subseteq V(C_{v,m})$) for each $1\leq i \leq m-1$ (resp.\ $1\leq i \leq m$). See Figure~\ref{fig:securing prey_0} for an illustration.
By the definition of $\langle 2,2 \rangle$ digraph, it is obvious that if  $i\neq j$, then
\[v^-_{i,i+1} \neq v^-_{j,j+1} \quad \text{and} 
\quad  v^+_{i,i+1} \neq v^+_{j,j+1}. \] 

 From now on, we use the notation $u \to v$ to represent ``$(u,v)$ is an arc of a digraph".

\begin{proposition}\label{prop:key1}
	Let $G$ be the CCE graph of a $\langle 2,2 \rangle$ digraph $D$ and $P_{u,\ell}$ and $P_{v,m}$ be two nontrivial paths of $G$ with $\ell \geq 3$.
	Suppose that $u_1 \to v_t$, $u_2 \to v_t$, and $u_2 \to v_{t+1}$ for some integer $t \in \{1, \ldots, m-1\}$.
	Then $u^+_{i,i+1}=v_{t+i-1}$ for each $1\leq i\leq \min (\ell-1,m-t+1)$. 
\end{proposition}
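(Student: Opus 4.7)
The plan is to prove the identity by induction on $i$, with the base case $i=1$ handed to us directly by the hypothesis and the inductive step relying on a short predator-chasing argument between the two paths $P_{u,\ell}$ and $P_{v,m}$. For the base case, the hypothesis $u_1 \to v_t$ and $u_2 \to v_t$ makes $v_t$ a common prey of $u_1$ and $u_2$, so Proposition~\ref{prop:property_degree2}(ii) immediately gives $u^+_{1,2}=v_t$.

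For the inductive step from $i$ to $i+1$ (with $i+1 \leq \min(\ell-1,m-t+1)$), the core task is to establish the arc $u_{i+1}\to v_{t+i}$; once that is secured, identifying $u^+_{i+1,i+2}$ is a short exercise. When $i=1$, this arc is literally the hypothesis $u_2\to v_{t+1}$. When $i\geq 2$, the inductive hypothesis combined with the fact that $u_i$ is interior in $P_{u,\ell}$ (using Proposition~\ref{prop:property_degree2}(i)) forces $N^+(u_i) = \{v_{t+i-2},v_{t+i-1}\}$, so $u_i\not\to v_{t+i}$; the indegree bound then forces the predators of $v_{t+i-1}$ to be exactly $u_i$ and $u_{i+1}$. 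Since $v_{t+i-1}$ and $v_{t+i}$ are adjacent in $G$, they share a common predator, which must lie in $\{u_i,u_{i+1}\}$ and hence has to be $u_{i+1}$.

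With $u_{i+1}\to v_{t+i}$ in hand, the inductive hypothesis also gives $u_{i+1}\to v_{t+i-1}$, and since $u_{i+1}$ is interior in $P_{u,\ell}$ we have $N^+(u_{i+1}) = \{v_{t+i-1},v_{t+i}\}$. The common prey $u^+_{i+1,i+2}$ lies in this set and cannot equal $v_{t+i-1}$---otherwise $v_{t+i-1}$ would acquire a third predator $u_{i+2}$, violating the indegree bound---so $u^+_{i+1,i+2}=v_{t+i}$, closing the induction.

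The main delicate point I expect is the $i=1$ versus $i\geq 2$ split within the inductive step: the predator-chasing argument needs structural data about $u_{i-1}$ that is only available for $i\geq 2$, so the very first propagation step has to be launched from the explicit arcs in the hypothesis rather than from the general mechanism. A secondary bookkeeping check is to verify that all invoked vertices---$u_{i+2}$ on $P_{u,\ell}$ and $v_{t+i}$ on $P_{v,m}$---exist, which is guaranteed precisely by the bound $i+1 \leq \min(\ell-1,m-t+1)$.
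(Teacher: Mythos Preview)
Your proof is correct and follows the same inductive skeleton as the paper's, with the same base cases $u^+_{1,2}=v_t$ and $u^+_{2,3}=v_{t+1}$. The one place you diverge is in the mechanism of the inductive step: the paper argues on the \emph{prey side} of $u_{k+1}$, invoking Proposition~\ref{prop:two_degree_property} to say that since $v_{t+k-1}$ has degree~$2$ in $G$, the other prey of $u_{k+1}$ must be adjacent to $v_{t+k-1}$, and then (implicitly) rules out $v_{t+k-2}$ via the indegree bound. You instead argue on the \emph{predator side} of $v_{t+i-1}$: you pin down $N^-(v_{t+i-1})=\{u_i,u_{i+1}\}$ directly from the inductive hypothesis and the indegree bound, then use the adjacency $v_{t+i-1}v_{t+i}$ in $G$ to force $u_{i+1}\to v_{t+i}$. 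Your route is slightly more self-contained (it never calls Proposition~\ref{prop:two_degree_property}) and makes the elimination of the ``wrong'' neighbor explicit, while the paper's route packages that step into a prior lemma; but the two arguments are really dual readings of the same degree count.
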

\begin{proof}
By the complete induction on $i$.
By the hypothesis, $u^+_{1,2}=v_{t}$.
Since $u_2 \to v_{t+1}$, $u^+_{2,3}=v_{t+1}$.
Suppose $u^+_{i,i+1}=v_{t+i-1}$ for any $2\leq i \leq k$ with $k<\min (\ell-1,m-t)$.
Since $k < m-t$, $v_{t+k-1}$ is an interior vertex of $P_{v,m}$ and so it has degree $2$.
Therefore, by the contrapositive of Proposition~\ref{prop:two_degree_property},
the other prey of $u_{k+1}$ must be adjacent to $v_{t+k-1}$.
Thus $v_{t+k}$ is a prey of $u_{k+1}$.
Since $k < \ell-1$, $u_{k+1}$ is an interior vertex of $P_{u,\ell}$.
Thus $u_{k+2} \to v_{t+k}$. 
Hence we have shown $u^+_{k+1,k+2}=v_{t+k}$.
\end{proof}

 If $P_{u,\ell}=P_{v,m}$ in Proposition~\ref{prop:key1}, then the condition $v^+_{1,2}=v_t$ implies $v_2 \to v_{t+1}$ as we show in the following useful theorem.

\begin{theorem}\label{thm:key2}
	Let $G$ be the CCE graph of a $\langle 2,2 \rangle$ digraph $D$ and $P_{v,m}$ be a path in $G$ for some integer $m \ge 3$. If $v^+_{1,2}=v_t$ for some integer $t \in \{3, \ldots, m\}$, then  $v^+_{i,i+1}=v_{t+i-1}$ and $v^-_{t+i-2,t+i-1}=v_{i}$ for each integer $1\le i \le m-t+1$ (see Figure~\ref{fig:self-sufficing}).
\end{theorem}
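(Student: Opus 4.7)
The strategy is to reduce the theorem to Proposition~\ref{prop:key1} by first securing the arc $v_2 \to v_{t+1}$ (when $t < m$) and then reading off the predator identities from the resulting saturated in-neighborhoods.

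First I would use the hypothesis to pin down $N^-(v_t)$. Since $v^+_{1,2} = v_t$ gives $v_1, v_2 \to v_t$, the indegree bound forces $N^-(v_t) = \{v_1, v_2\}$. When $t < m$, $v_t$ is an interior vertex of $P_{v,m}$, and Proposition~\ref{prop:property_degree2}(ii) applied to the edges $v_{t-1}v_t$ and $v_tv_{t+1}$ yields $\{v^-_{t-1,t}, v^-_{t,t+1}\} = \{v_1, v_2\}$. The task is to rule out the alternative assignment in which $v^-_{t,t+1} = v_1$ and $v^-_{t-1,t} = v_2$. In that case the prey of $v_1$ are $\{v_t, v_{t+1}\}$ and the prey of $v_2$ are $\{v_t, v_{t-1}\}$. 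Since $N^-(v_t)$ is already saturated, the common prey of $v_2, v_3$ must be $v_{t-1}$, so $v_3 \to v_{t-1}$; and since $v_{t-1}$ is an interior vertex of degree $2$ in $G$, Proposition~\ref{prop:two_degree_property} applied to $v_3$ forces its other prey to be adjacent to $v_{t-1}$, and the only admissible candidate is $v_{t-2}$. Iterating, I would show by induction that the prey of $v_i$ are $\{v_{t-i+2}, v_{t-i+1}\}$ for each admissible $i$; the propagation reaches the index $i = \lfloor t/2 \rfloor$, at which point the conclusion forces an arc of the form $v_j \to v_j$, impossible in the simple digraph $D$. Hence the alternative is ruled out and $v_2 \to v_{t+1}$.

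With $v_1 \to v_t$, $v_2 \to v_t$, and $v_2 \to v_{t+1}$ in hand, Proposition~\ref{prop:key1} applied with $P_{u,\ell} = P_{v,m}$ and the given $t$ yields $v^+_{i,i+1} = v_{t+i-1}$ for all $1 \le i \le \min(m-1, m-t+1) = m-t+1$. Each such identity gives $v_i, v_{i+1} \to v_{t+i-1}$, so the indegree bound saturates $N^-(v_{t+i-1}) = \{v_i, v_{i+1}\}$. For $2 \le i \le m-t+1$, the common predator is then forced by intersection:
\[
v^-_{t+i-2, t+i-1} \in N^-(v_{t+i-2}) \cap N^-(v_{t+i-1}) = \{v_{i-1}, v_i\} \cap \{v_i, v_{i+1}\} = \{v_i\}.
\]
For $i = 1$, $v^-_{t-1,t} = v_1$ is the remaining assertion established when the alternative assignment was rejected. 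The residual case $t = m$ (where no $v_{t+1}$ exists and the claim collapses to $i = 1$) is handled by applying the same sliding argument to the hypothetical equation $v^-_{m-1,m} = v_2$, which again forces a self-loop and so yields $v^-_{m-1,m} = v_1$.

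The main obstacle is the sliding induction of the first step: I have to verify that the prey indices remain in $\{1, \ldots, m\}$ throughout, that the interior-vertex hypothesis of Proposition~\ref{prop:two_degree_property} is preserved along the chain, and that the boundary subcases $t = 3$ (where $v_{t-1} = v_2$ gives an immediate self-loop) and $t = m$ fit seamlessly into the same self-loop template.
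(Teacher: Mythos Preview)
Your proposal is correct and follows essentially the same route as the paper: identify $N^-(v_t)=\{v_1,v_2\}$, rule out $v^-_{t-1,t}=v_2$ by sliding the prey indices downward until a self-loop appears, then invoke Proposition~\ref{prop:key1} and read off the predator identities from the saturated in-neighborhoods. The paper packages the sliding step more compactly by applying Proposition~\ref{prop:key1} directly to $P_{v,m}$ and its reversal $P^{-1}_{v,m}$ (which also dispenses with your separate $t=m$ case), and locates the self-loop at $v_{\lfloor (t+2)/2\rfloor}$ rather than at index $\lfloor t/2\rfloor$ as you wrote; otherwise the arguments coincide.
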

\begin{proof}
	Suppose $v^+_{1,2}=v_t$ for some integer $t \in \{3, \ldots, m\}$.
	Then $N^-(v_t)=\{v_1, v_2\}$ and so $v^-_{t-1,t}$ is either $v_1$ or $v_2$.
	To the contrary, suppose $v^-_{t-1,t}=v_2$.
	Then $v_2\to v_{t-1}$.
	Since $v^+_{1,2}=v_t$, by applying Proposition~\ref{prop:key1} to $P_{v,m}$ and $P^{-1}_{v,m}$ where $P^{-1}_{v,m}$ is the path obtained from $P_{v,m}$ by reversing its sequence, 
	$v^+_{i,i+1}=v_{t-i+1}$ for each $1\leq i \leq t-1$.
%
%
	Now it is easy to check that $v_{\left\lfloor\frac{t+2}{2}\right\rfloor}$ is incident to a loop and we reach a contradiction.
	Therefore $v^-_{t-1,t}=v_1$.
	Then, if $v_{t+1}$ exists, $v^-_{t,t+1}$ must be $v_2$ and so, by applying Proposition~\ref{prop:key1} to $P_{v,m}$ and itself, we reach the desired conclusion.
	\end{proof}
	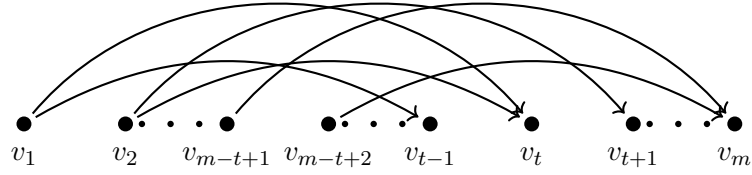
\begin{figure}
			\begin{center}
			\begin{tikzpicture}[scale=0.9]
			\tikzset{mynode/.style={inner sep=2pt,fill,outer sep=2.3pt,circle}}
			\node [mynode] (v1) at (-4.5,0) [label=below :$v_1$] {};
			\node [mynode] (v2) at (-3,0) [label=below :$v_2$] {};
			\node [mynode] (v3) at (-1.5,0) [label=below :$v_{m-t+1}$] {};
			\node [mynode] (v4) at (0,0) [label=below :$v_{m-t+2}$] {};
			\node [mynode] (v5) at (1.5,0) [label=below :$v_{t-1}$] {};
			\node [mynode] (v6) at (3,0) [label=below :$v_{t}$] {};
			\node [mynode] (v7) at (4.5,0) [label=below :$v_{t+1}$] {};
			\node [mynode] (v8) at (6,0) [label=below :$v_{m}$] {};
			\draw[->, thick] (v1) edge[bend left=50] (v6);
			\draw[->, thick] (v2) edge[bend left=50] (v7);
			\draw[->, thick] (v3) edge[bend left=50] (v8);
			\draw[->, thick] (v1) edge[bend left=30] (v5);
			\draw[->, thick] (v2) edge[bend left=30] (v6);
			\draw[->, thick] (v4) edge[bend left=30] (v8);
			\path (v2) -- (v3) node [font=\Huge, midway] {$\dots$};
			\path (v4) -- (v5) node [font=\Huge, midway] {$\dots$};
			\path (v7) -- (v8) node [font=\Huge, midway] {$\dots$};
			\end{tikzpicture}
			\end{center}
			\caption{The arc set in Theorem~\ref{thm:key2} }
			\label{fig:self-sufficing}
			\end{figure}


 The degree boundedness of $\langle 2,2 \rangle$ digraph and the previous proposition ensure the following proposition.
\begin{proposition} \label{prop:component_cycle_neighbor}
	Let $G$ be the CCE graph of a $\langle 2,2 \rangle$ digraph $D$. 
	Given a path $P_{v,m}$ (resp.\ a cycle $C_{v,m}$)  in $G$ for some integer $m \geq 3$, the following are true: 
	\begin{enumerate}[(i)]
\item $v^-_{i,i+1} \neq v^-_{j,j+1}$ and $v^+_{i,i+1} \neq v^+_{j,j+1}$ for distinct $1 \le i,j \le m-1$ (resp.\ $1 \le i,j \le m$); 
\item 
$v^-_{i,i+1}$ is adjacent only to $v^-_{i-1,i}$ or $v^-_{i+1,i+2}$ in $G$ for each $2 \le i \le m-2$ (resp.\ $1\le i \le m$).
\item $v^+_{i,i+1}$ is adjacent only to $v^+_{i-1,i}$ or $v^+_{i+1,i+2}$ in $G$ for each $2 \le i \le m-2$ (resp.\ $1\le i \le m$).
%
\end{enumerate}
\end{proposition}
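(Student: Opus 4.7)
The plan is to deduce all three parts directly from the degree bounds of $D$, leveraging the uniqueness of the common prey and predator that was established in Proposition~\ref{prop:property_degree2}.

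For part (i), I would argue by contradiction. If $v^-_{i,i+1} = v^-_{j,j+1} =: w$ for some $i \ne j$, then $w$ preys on each of $v_i, v_{i+1}, v_j, v_{j+1}$; since consecutive index pairs $\{i,i+1\} \ne \{j,j+1\}$ yield at least three distinct vertices of $P_{v,m}$ (or of $C_{v,m}$), this forces $d^+(w) \ge 3$, contradicting $\langle 2,2 \rangle$. The corresponding statement for $v^+_{i,i+1}$ is the dual argument using the indegree bound $d^-(w) \le 2$.

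For parts (ii) and (iii) the first step is to fully pin down the in- and out-neighborhoods dictated by the degree bound. Because $v^-_{i,i+1}$ is a predator of both $v_i$ and $v_{i+1}$ and $d^+(v^-_{i,i+1}) \le 2$, we get $N^+(v^-_{i,i+1}) = \{v_i,v_{i+1}\}$; dually, $N^-(v^+_{i,i+1}) = \{v_i,v_{i+1}\}$. Moreover, whenever $v_k$ is an interior vertex of $P_{v,m}$ or any vertex of $C_{v,m}$, it has degree $2$ in $G$, so Proposition~\ref{prop:property_degree2}(i) yields $d^-(v_k) = d^+(v_k) = 2$; the two predators of $v_k$ must therefore be $v^-_{k-1,k}$ and $v^-_{k,k+1}$, and its two prey must be $v^+_{k-1,k}$ and $v^+_{k,k+1}$.

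For (ii), let $x$ be a neighbor of $v^-_{i,i+1}$ in $G$. Then $x$ and $v^-_{i,i+1}$ share a common prey in $D$, which must lie in $N^+(v^-_{i,i+1}) = \{v_i,v_{i+1}\}$. The index restriction $2 \le i \le m-2$ in the path case (and the cyclic convention in the cycle case) guarantees that both $v_i$ and $v_{i+1}$ are interior/cyclic vertices, so their predator sets are exactly as described above. Hence $x \in \{v^-_{i-1,i}, v^-_{i,i+1}, v^-_{i+1,i+2}\}$, and excluding $x = v^-_{i,i+1}$ gives the conclusion. Part (iii) is obtained by the entirely dual argument, using common predators and the out-neighborhoods $\{v^+_{i-1,i}, v^+_{i,i+1}\}$ and $\{v^+_{i,i+1}, v^+_{i+1,i+2}\}$ of $v_i$ and $v_{i+1}$. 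The proposition is essentially bookkeeping with the $\langle 2,2 \rangle$ constraint; the only subtlety worth checking is that the endpoint restriction $2 \le i \le m-2$ in the path case is exactly what is needed for both $v_i$ and $v_{i+1}$ to have degree $2$ in $G$, so that their full neighborhoods are determined.
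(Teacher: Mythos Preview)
Your argument is correct and matches the paper's approach: the paper does not spell out a proof at all, simply remarking that the degree boundedness of a $\langle 2,2\rangle$ digraph together with the earlier propositions (in particular Proposition~\ref{prop:property_degree2}) yield the result, and your proof is precisely the natural unpacking of that remark. Part~(i) is even stated as ``obvious'' earlier in the paper, for exactly the reason you give.
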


\section{A proof of Theorem~\ref{thm:sumup}}
\label{sec:Only path components}

This section is devoted to proving Theorem~\ref{thm:sumup} which completely characterizes the $\langle 2,2 \rangle$ CCE graphs.


Given a digraph $D$, we let $D^{\leftarrow}$ be the digraph obtained from $D$ by reversing the direction of each arc in $D$.
Then it is easy to check that $CCE(D)=CCE(D^{\leftarrow})$.

\begin{proposition}\label{prop:reverse_property}
Let $G$ be a $\langle 2,2 \rangle$ CCE graph and $\mathcal{D}$ be the collection of $\langle 2,2 \rangle$ digraphs each of whose CCE graph is $G$.
If the statement $\alpha$ is a property satisfied by each digraph in $\mathcal{D}$,
then the statement $\alpha^{\leftarrow}$ obtained from $\alpha$ by replacing the term `prey' (resp.\ `predator') with the term `predator' (resp.\ `prey') is also satisfied by each digraph in $\mathcal{D}$.
\end{proposition}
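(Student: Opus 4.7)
The plan is to exploit the arc-reversal involution $D \mapsto D^{\leftarrow}$ discussed immediately before the statement. First I would verify that this map sends $\mathcal{D}$ into itself: reversing every arc swaps $d^{-}$ and $d^{+}$ at every vertex, so a $\langle 2,2\rangle$ digraph remains $\langle 2,2\rangle$, and as already observed, $CCE(D^{\leftarrow}) = CCE(D) = G$. Hence $D \in \mathcal{D}$ implies $D^{\leftarrow} \in \mathcal{D}$.

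The second step is to notice that arc reversal exactly swaps the two relations named in the proposition: a vertex $x$ is a prey of $y$ in $D^{\leftarrow}$ if and only if $x$ is a predator of $y$ in $D$, and a vertex $x$ is a predator of $y$ in $D^{\leftarrow}$ if and only if $x$ is a prey of $y$ in $D$. Thus, regarding the statement $\alpha$ as a formula whose only non-logical vocabulary consists of `prey' and `predator', asserting $\alpha$ of $D^{\leftarrow}$ is, when rewritten in the primitive vocabulary of $D$, literally the assertion of $\alpha^{\leftarrow}$ for $D$.

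Combining the two observations, fix an arbitrary $D \in \mathcal{D}$. Since $D^{\leftarrow}$ also lies in $\mathcal{D}$, the hypothesis that $\alpha$ holds for every element of $\mathcal{D}$ forces $\alpha$ to hold for $D^{\leftarrow}$; translating back to the vocabulary of $D$ yields $\alpha^{\leftarrow}$ for $D$. As $D$ was arbitrary, $\alpha^{\leftarrow}$ holds throughout $\mathcal{D}$. There is no substantive obstacle here beyond making the vocabulary-swap precise; the entire argument is a one-line consequence of the fact that $D \mapsto D^{\leftarrow}$ is an involution on $\mathcal{D}$ that interchanges the meanings of `prey' and `predator'.
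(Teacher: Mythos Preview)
Your proof is correct and follows essentially the same approach as the paper: both arguments use that $D\mapsto D^{\leftarrow}$ maps $\mathcal{D}$ to itself (since $CCE(D^{\leftarrow})=CCE(D)$ and reversing arcs preserves the $\langle 2,2\rangle$ condition), apply the hypothesis to $D^{\leftarrow}$, and then translate $\alpha$ for $D^{\leftarrow}$ into $\alpha^{\leftarrow}$ for $D$. Your version is slightly more explicit about why $D^{\leftarrow}$ remains a $\langle 2,2\rangle$ digraph, but the logical structure is identical.
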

\begin{proof}
	Suppose that the statement $\alpha$ is a property satisfied by each digraph in $\mathcal{D}$.
	Take a digraph $D$ in $\mathcal{D}$.
	Then, since $CCE(D)=CCE(D^{\leftarrow})$,
	$D^{\leftarrow} \in \mathcal{D}$ and so $D^{\leftarrow}$ satisfies $\alpha$.
	By the definition of $D^{\leftarrow}$,
	$\alpha^{\leftarrow}$ is satisfied by $D$.
	Since $D$ was arbitrarily chosen from $\mathcal{D}$, $\alpha^{\leftarrow}$ is satisfied by each digraph in $\mathcal{D}$.
\end{proof}

\begin{lemma} \label{lem:a path}
A nontrivial path is not a $\langle 2,2 \rangle$ CCE graph. 
\end{lemma}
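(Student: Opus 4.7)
Plan. The proof is by contradiction. Assume $P_{v,m}$ is the CCE graph of a $\langle 2,2\rangle$ digraph $D$, so $V(D) = \{v_1, \ldots, v_m\}$. For $m = 2$, any common prey of $v_1$ and $v_2$ must lie in $\{v_1, v_2\}$ and therefore creates a loop, which is impossible. Hence I may assume $m \geq 3$ and, since loops are forbidden, set $v^+_{1,2} = v_t$ and $v^-_{1,2} = v_{t'}$ with $t, t' \in \{3, \ldots, m\}$.

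Theorem~\ref{thm:key2} gives $v^+_{i,i+1} = v_{t+i-1}$ for $1 \leq i \leq m-t+1$, and its dual obtained from Proposition~\ref{prop:reverse_property} gives $v^+_{t'+i-2, t'+i-1} = v_i$ for $1 \leq i \leq m-t'+1$. Distinctness of all $v^+_{j,j+1}$ (Proposition~\ref{prop:component_cycle_neighbor}(i)) forces the two formulas to cover disjoint index ranges, which reduces to $t + t' \geq m+3$. I now split on whether equality holds. If $t + t' = m+3$, the two formulas jointly determine $v^+_{j,j+1}$ for every $j \in \{1, \ldots, m-1\}$, and the arcs of $D$ are forced into the cyclic pattern $v_i \to v_{t+i-2}, v_{t+i-1}$ with indices taken modulo $m$; a direct edge-count shows the CCE graph of such a digraph is $C_m$, not $P_{v,m}$, a contradiction.

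If instead $t + t' \geq m+4$, Theorem~\ref{thm:key2} forces $v_1 \to v_{t-1}$, so $v_1$ is an in-neighbor of $v_{t-1}$. Since $v_{t-1}$ is an interior vertex of $P_{v,m}$, its indegree equals $2$ by Proposition~\ref{prop:property_degree2}(i); let $v_r$ be its second in-neighbor, so $r \neq 1$. Using the injectivity from Proposition~\ref{prop:component_cycle_neighbor}(i) together with the explicit images of both theorem applications, I would argue that $v_{t-1}$ is the unique vertex of $V(D)$ not appearing as any $v^+_{j,j+1}$; this rules out $r = 2$, for then $v_1, v_2$ would be in-neighbors of $v_{t-1}$, forcing $v_{t-1} = v^+_{1,2} = v_t$, which is absurd. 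For $r \in \{3, \ldots, m-1\}$ the vertex $v_r$ is interior in $G$, and Proposition~\ref{prop:property_degree2}(ii) pins its prey set to $\{v^+_{r-1,r}, v^+_{r,r+1}\}$, which cannot contain the missing vertex $v_{t-1}$. Finally, the dual of Theorem~\ref{thm:key2} pins the out-neighbors of $v_m$ to $\{v_{m-t'+1}, v_{m-t'+2}\}$, and $v_{t-1}$ lies in this pair only when $t + t' \in \{m+2, m+3\}$, excluded in this case; hence $r \neq m$. No valid $r$ survives, completing the contradiction.

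The main obstacle is the second case, where the critical step is identifying $v_{t-1}$ as the unique vertex of $V(D)$ missing from $\{v^+_{j,j+1} : 1 \leq j \leq m-1\}$. The key leverage is Proposition~\ref{prop:property_degree2}(ii), which rigidly ties each interior vertex's prey set to two entries of that family; consequently a vertex missing from the family cannot be a prey of any interior vertex, which collapses the exhaustion over $r$ to the three easy sub-cases above.
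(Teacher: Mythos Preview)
Your approach is different from the paper's and is more structural: you introduce $t'$ with $v^-_{1,2}=v_{t'}$, apply the dual of Theorem~\ref{thm:key2}, and split on whether $t+t'=m+3$ or $t+t'\ge m+4$. The paper never introduces $t'$; instead it uses the reverse of Proposition~\ref{prop:two_degree_property} (via Proposition~\ref{prop:reverse_property}) to show that the second predator of $v_{t-1}$ must have degree at most $1$ in $G$, hence must be $v_m$. That single observation replaces your entire exhaustion over $r$ and produces the common prey of $v_1,v_m$ with no case split; a symmetric argument then gives the common predator.

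There is a genuine gap in your Case~2. Your stated justification for ``$v_{t-1}$ is the unique vertex not appearing as any $v^+_{j,j+1}$'' --- injectivity plus the explicit images of the two formulas --- only shows that the missing vertex lies in $\{v_{m-t'+2},\ldots,v_{t-1}\}$, a set of size $t+t'-m-2\ge 2$; nothing in those data singles out $v_{t-1}$. The claim is correct, but it needs the extra step you essentially give when ruling out $r=2$: since $v_1\to v_{t-1}$ and $|N^-(v_{t-1})|\le 2$, any equality $v_{t-1}=v^+_{j,j+1}$ forces $v_1\in\{v_j,v_{j+1}\}$, hence $j=1$, contradicting $v^+_{1,2}=v_t$. (Running the same argument with $v_m\to v_{m-t'+2}$, which comes from the dual, shows that $v_{m-t'+2}$ is \emph{also} not any $v^+_{j,j+1}$; since exactly one vertex is missing and $v_{t-1}\ne v_{m-t'+2}$ when $t+t'\ge m+4$, this already yields the contradiction without the $r$-exhaustion.) There is also a smaller gap in Case~1: the $v^+$ formulas alone determine $N^+(v_i)$ only for interior $i$; to force the out-arcs of $v_1$ and $v_m$ into the cyclic pattern you must also invoke the $v^-$ conclusions $v^-_{t-1,t}=v_1$ and $v^-_{m-t'+1,m-t'+2}=v_m$ from Theorem~\ref{thm:key2} and its dual.
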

\begin{proof}
To the contrary, suppose that a nontrivial path $P_{v,m}$ is the CCE graph of a $\langle 2,2 \rangle$ digraph $D$ for some integer $m>1$.
Any two adjacent vertices of $P_{v,m}$ must have a common prey and a common predator.
Since $D$ is loopless, $m\ge 3$ and $v^+_{1,2}=v_t$ for some integer $t \in \{3, \ldots, m\}$.
By Theorem~\ref{thm:key2}, 
\begin{equation}\label{eq:1}
	v^+_{i,i+1}=v_{t+i-1}
\end{equation}
and 
\begin{equation}\label{eq:2}
v^-_{t+i-2,t+i-1}=v_{i}
\end{equation}
for each $1\le i \le m-t+1$.
Then $v^-_{t-1,t}=v_1$ by substituting $i=1$ in \eqref{eq:2}.
Since $v_t$ is already a prey of $v_2$, the only neighbor $v_2$ of $v_1(=v^-_{t-1,t})$ in $G$ cannot be a predator of $v_{t-1}$.
Since $v_{t-1}$ has degree $2$, there is a predator other than $v_1$ by Proposition~\ref{prop:property_degree2}(i).
Since we have shown that it cannot be $v_2$, it is not adjacent to $v_1$ in $G$.
Thus, by Propositions~\ref{prop:two_degree_property} and \ref{prop:reverse_property}, we have $v_m \to v_{t-1}$ and so
$v^-_{t-2,t-1}=v_m$.
Then $v_{t-1}$ is a common prey of $v_1$ and $v_m$.
We first consider the case where $t<m$.
Then $m-t \ge 1$ and so $v_{m-1}=v^+_{m-t,m-t+1}$ by substituting $i=m-t$ in \eqref{eq:1}.
Thus the only neighbor $v_{m-1}$ of $v_m(=v^+_{m-t+1,m-t+2})$ cannot be a prey of $v_{m-t+2}$.
Note that $v_{m-t+2}$ is an interior vertex of $P_{v,m}$.
By applying a similar argument for $v_{t-1}$, we may show that $v_{m-t+2} \to v_1$ and so
$v^+_{m-t+2,m-t+3}=v_1$.
Thus $v_{m-t+2}$ is a common predator of $v_1$ and $v_m$.
Now we consider the case where $t=m$. 
Then $v^+_{1,2}=v_m$.
Since $v_{t-1}$ is a common prey of $v_1$ and $v_m$, the only neighbor $v_{m-1}(=v_{t-1})$ of $v_m$ cannot be a prey of $v_2$.
By applying a similar argument for $v_{t-1}$, we may show that $v_2 \to v_1$.
Then $v_{2}$ is a common predator of $v_1$ and $v_m$.
Whether $t<m$ or $t=m$, $v_1$ and $v_m$ have a common prey and a common predator in $D$.
Thus $v_1$ and $v_m$ are adjacent in $CCE(D)$, which is a contradiction.
\end{proof}

Given a positive integer $m\ge 3$, we consider a digraph $D_{v,m}^{\stackrel{t}{\curvearrowright}}$ with the vertex set \[V(D_{v,m}^{\stackrel{t}{\curvearrowright}})=\{v_1,v_2, \ldots, v_m\}\] and the arc set 

\[A(D_{v,m}^{\stackrel{t}{\curvearrowright}})=\bigcup_{k=1}^{m}\{(v_k, v_{k+t}),(v_k, v_{k+t+1})\}\]
for some $t \in \{1, \ldots, m-2\}$
(identify $v_{m+i}$ with $v_i$ for each integer $i$).
For each vertex $v_i$ in $D_{v,m}^{\stackrel{t}{\curvearrowright}}$,
\begin{equation}\label{eq:N}
	N^+(v_i)=\{v_{i+t},v_{i+t+1}\} \quad \text{and} \quad  N^-(v_i)=\{v_{i-t-1}, v_{i-t}\}.
\end{equation}
Since $t \in \{1, \ldots, m-2\}$, $D_{v,m}^{\stackrel{t}{\curvearrowright}}$ is loopless and so it is a $\langle  2,2 \rangle$ digraph.
Moreover, $v_{i+t+1}$ (resp.\ $v_{i-t}$) is a common prey (resp.\ predator) of $v_i$ and $v_{i+1}$ for each integer $1\le i \le m$.
Thus $C_{v,m}$ is a subgraph of $CCE(D_{v,m}^{\stackrel{t}{\curvearrowright}})$.
Therefore, by Proposition~\ref{prop:degree},
\begin{equation}\label{eq:CCE(D_{v,m}(t))}
	CCE(D_{v,m}^{\stackrel{t}{\curvearrowright}})=C_{v,m}
\end{equation}
and
\begin{equation}\label{eq:D_{v,m}(t)}
v^+_{i,i+1}=v_{i+t+1}	\quad \text{and} \quad v^-_{i,i+1}=v_{i-t}
\end{equation}
in $D_{v,m}^{\stackrel{t}{\curvearrowright}}$ for each integer $1\le i \le m$.
Hence we obtain the following lemma.

\begin{lemma}\label{lem:a cycle}
		 A cycle of length at least $3$ is a $\langle 2,2 \rangle$ CCE graph.
\end{lemma}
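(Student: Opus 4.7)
The plan is essentially to invoke the construction $D_{v,m}^{\stackrel{t}{\curvearrowright}}$ that has just been introduced in the paragraph preceding the lemma. Given any integer $m \geq 3$, I would fix some $t \in \{1, \ldots, m-2\}$ (for instance $t=1$, which is admissible precisely because $m\ge 3$), and take the digraph $D_{v,m}^{\stackrel{t}{\curvearrowright}}$ as the witness that $C_m$ is a $\langle 2,2 \rangle$ CCE graph.

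All of the nontrivial verification has already been carried out in the setup: since $t\le m-2$, the digraph $D_{v,m}^{\stackrel{t}{\curvearrowright}}$ is loopless; the identities in~\eqref{eq:N} force $d^+(v_i)=d^-(v_i)=2$ for every $i$, so $D_{v,m}^{\stackrel{t}{\curvearrowright}}$ qualifies as a $\langle 2,2\rangle$ digraph; and each consecutive pair $v_i, v_{i+1}$ on $C_{v,m}$ shares $v_{i+t+1}$ as a common prey and $v_{i-t}$ as a common predator, so every edge of $C_{v,m}$ is present in $CCE(D_{v,m}^{\stackrel{t}{\curvearrowright}})$. Combined with Proposition~\ref{prop:degree}, which caps the degree of every vertex in the CCE graph of a $\langle 2,2\rangle$ digraph at $2$, the already-present $2$-regular subgraph $C_{v,m}$ cannot be enlarged, giving the equality~\eqref{eq:CCE(D_{v,m}(t))}.

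Thus the lemma is an immediate corollary of~\eqref{eq:CCE(D_{v,m}(t))}, and the proof amounts to a one-line invocation of the preceding construction. There is no genuine obstacle; the only point that needs to be noted is that the range $\{1,\dots, m-2\}$ for $t$ is nonempty exactly under the hypothesis $m\ge 3$, so the construction applies to every cycle length covered by the statement.
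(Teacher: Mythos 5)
Your proposal is correct and is essentially identical to the paper's own argument: the authors also construct $D_{v,m}^{\stackrel{t}{\curvearrowright}}$, observe that $C_{v,m}$ is a spanning subgraph of its CCE graph, and conclude equality from the degree bound of Proposition~\ref{prop:degree}. Nothing further is needed.
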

Given two vertex-disjoint graphs $G_1$ and $G_2$,
the {\it union} $G_1 \cup G_2$ of $G_1$ and $G_2$ is the graph with the vertex 
$V(G_1)\cup V(G_2)$ and the edge set $E(G_1)\cup E(G_2)$.
\begin{proposition}\label{prop:Pm,Pn}
		For positive integers $m$ and $n$, $P_m \cup P_n$ is a $\langle 2,2 \rangle$ CCE graph.
	\end{proposition}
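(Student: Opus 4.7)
The plan is to construct, for every pair of positive integers $(m,n)$, an explicit $\langle 2,2 \rangle$ digraph $D$ on the vertex set $\{u_1,\dots,u_m\}\cup\{w_1,\dots,w_n\}$ whose CCE graph is $P_m \cup P_n$, where $P_m = u_1 u_2 \cdots u_m$ and $P_n = w_1 w_2 \cdots w_n$. After disposing of the trivial case $m = n = 1$ by taking $D$ to have no arcs (so that $CCE(D)$ consists of two isolated vertices, which is $P_1 \cup P_1$), I proceed by cases on $(m,n)$, assuming without loss of generality $m \le n$ with $n \ge 2$.

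The main construction, to be used when $m \ge 2$ and $n \le m+1$, is a mutual-feeding template: declare arcs $u_i \to w_i$, $u_i \to w_{i+1}$ for $1 \le i \le m$ and $w_j \to u_{j-1}$, $w_j \to u_j$ for $1 \le j \le n$, keeping only those arcs whose targets exist. Under this rule, each $u_i u_{i+1}$ picks up $w_{i+1}$ as both a common prey and a common predator, each $w_j w_{j+1}$ picks up $u_j$ similarly, and every vertex of $D$ has indegree and outdegree at most $2$. No other pair shares both a common prey and a common predator: every prey of a $u$-vertex is a $w$-vertex and every prey of a $w$-vertex is a $u$-vertex, so mixed pairs $u_iw_j$ have no common prey at all, and in each of the same-side cases one checks that the intersections of the size-$\le 2$ prey/predator sets collapse to singletons only for consecutive indices.

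For the remaining cases---$m = 1$ with $n \ge 2$, and $m \ge 2$ with $n \ge m+2$---I would rely on a self-feeding scheme among the $w_j$'s, namely $w_j \to w_{j-2}$, $w_j \to w_{j-1}$ for $3 \le j \le n$, which is a path-analogue of the cycle construction $D_{v,n}^{\stackrel{1}{\curvearrowright}}$ from Lemma~\ref{lem:a cycle}. For $m = 1$, combine this with the boundary arcs $u_1 \to w_{n-1}$, $u_1 \to w_n$, $w_1 \to u_1$, $w_2 \to u_1$, and $w_2 \to w_1$; a direct verification shows that each $w_j w_{j+1}$ acquires a common prey and a common predator, while $u_1$ remains isolated in $CCE(D)$ (the only potential shared prey of $u_1$ and some $w_j$ is $w_{n-1}$ with $w_n$, but this pair has no common predator). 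For $m \ge 2$ with $n \ge m+2$, I would splice the mutual-feeding construction on $\{u_1,\dots,u_m,w_1,\dots,w_{m+1}\}$ with the self-feeding arcs on $\{w_{m+1},\dots,w_n\}$, rebalancing the arcs incident to $w_{m+1}$ so that the degree bound is preserved.

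The main technical obstacle is the interface in the last subcase: the vertex $w_{m+1}$ sits in both schemes, so its indegree and outdegree must be recomputed after the splice, and one must verify that no spurious common-prey or common-predator coincidence is introduced across the interface. The verification in all cases is otherwise routine bookkeeping: by Propositions~\ref{prop:degree} and~\ref{prop:property_degree2}, each vertex of a $\langle 2,2 \rangle$ digraph has at most two preys and two predators, so the CCE adjacency condition reduces to a finite pair-by-pair check of whether the corresponding size-$\le 2$ prey and predator sets both intersect.
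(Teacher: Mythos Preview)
Your approach is genuinely different from the paper's and a good deal more intricate. The paper gives one uniform construction with no case split on the relative sizes of $m$ and $n$ (beyond the trivial $m=n=1$): take the cycle digraph $D_{v,m+n}^{\stackrel{m-1}{\curvearrowright}}$, for which $CCE(D_{v,m+n}^{\stackrel{m-1}{\curvearrowright}})=C_{m+n}$ is already known, and delete the single arc $(v_1,v_m)$. By \eqref{eq:D_{v,m}(t)} this arc witnesses both $v^+_{m+n,1}=v_m$ and $v^-_{m,m+1}=v_1$, so its removal kills exactly the two edges $v_{m+n}v_1$ and $v_mv_{m+1}$ of $C_{m+n}$, leaving $P_m\cup P_n$.

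Your mutual-feeding template (covering $m\le n\le m+1$) and your $m=1$ construction both check out. The real gap is the splicing subcase $m\ge 2$, $n\ge m+2$, which you flag but do not carry out; it is not just bookkeeping. With the two blocks as you describe them, $w_{m+1}$ already has indegree $3$ (from $u_m$, $w_{m+2}$, and $w_{m+3}$), and independently the edge $w_{m+1}w_{m+2}$ is absent because the only prey of $w_{m+1}$ is $u_m$ while the only prey of $w_{m+2}$ is $w_{m+1}$. Each obvious local fix propagates the defect: adding $w_{m+1}\to w_m$ or $w_{m+2}\to w_m$ pushes indegree $3$ to $w_m$; deleting $u_m\to w_{m+1}$ to free a slot kills the common predator of $w_mw_{m+1}$. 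So the ``rebalancing'' cannot be confined to arcs incident to $w_{m+1}$, contrary to what your outline suggests. The paper's one-arc-deletion from a cycle is exactly the missing idea that bypasses the interface problem entirely.
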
	
	\begin{proof}
		Fix positive integers $m$ and $n$.
		Since an edgeless graph is a $\langle 2,2 \rangle$ CCE graph, the case $m=n=1$ is clear.
		Without loss of generality, we assume $m \ge 2$.
		We consider the digraph \[D:=D_{v,m+n}^{\stackrel{m-1}{\curvearrowright}}-(v_1,v_{m}).\]
		Since $D_{v,m+n}^{\stackrel{m-1}{\curvearrowright}}$ is a $\langle 2,2 \rangle$ digraph, $D$ is a $\langle 2,2 \rangle$ digraph.
		By \eqref{eq:D_{v,m}(t)}, 
		\[v^+_{i,i+1}=v_{i+m}	\quad \text{and} \quad v^-_{i,i+1}=v_{i-m+1} \]
		in $D_{v,m+n}^{\stackrel{m-1}{\curvearrowright}}$ for each integer $1\le i \le m+n$.
		Especially, $v^+_{m+n,1}=v_{m}$ and $v^-_{m, m+1}=v_1$ in $D_{v,m+n}^{\stackrel{m-1}{\curvearrowright}}$ since we identify $v_{m+n+j}$ with $v_j$ for each integer $j$.
		Thus removing the arc $(v_1,v_{m})$ from $D_{v,m+n}^{\stackrel{m-1}{\curvearrowright}}$ deletes the edges $\{v_1,v_{m+n}\}$ and $\{v_m,v_{m+1}\}$ in $CCE(D_{v,m+n}^{\stackrel{m-1}{\curvearrowright}})$ so that the CCE graph of $D$ is the union of paths \[v_1v_2\cdots v_{m} \quad  \text{and} \quad v_{m+1}v_{m+2} \cdots v_{m+n}\] by \eqref{eq:CCE(D_{v,m}(t))}.
		Hence $CCE(D) \cong P_{m} \cup P_n$. 
	\end{proof}

Given CCE graphs $G_1$ and $G_2$ of $\langle 2,2 \rangle$ digraphs $D_1$ and $D_2$, respectively,
the digraph with the vertex set $V(D_1) \cup V(D_2)$ and the arc set $A(D_1)\cup A(D_2)$ is a $\langle 2,2 \rangle$ digraph whose CCE graph is $G_1 \cup G_2$.
Thus the following lemma is true.

\begin{lemma}\label{lem:union}
	For any $\langle 2,2 \rangle$ CCE graphs $G_1$ and $G_2$, $G_1\cup G_2$ is a $\langle 2,2 \rangle$ CCE graph.
\end{lemma}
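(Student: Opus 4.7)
The plan is to exhibit an explicit $\langle 2,2\rangle$ digraph whose CCE graph is $G_1\cup G_2$, by simply taking the disjoint union of witness digraphs for $G_1$ and $G_2$. So first I would invoke the hypothesis to pick $\langle 2,2\rangle$ digraphs $D_1$ and $D_2$ with $CCE(D_1)=G_1$ and $CCE(D_2)=G_2$. By relabeling vertices if necessary, I may assume $V(D_1)\cap V(D_2)=\emptyset$, which makes the union $G_1\cup G_2$ well-defined in the sense of the preceding paragraph.

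Next, I would define $D$ to be the digraph with $V(D)=V(D_1)\cup V(D_2)$ and $A(D)=A(D_1)\cup A(D_2)$. Because $V(D_1)$ and $V(D_2)$ are disjoint, every vertex $x\in V(D)$ lies in exactly one $D_i$, and its in- and out-neighborhoods in $D$ coincide with its in- and out-neighborhoods in $D_i$; in particular $d^-_D(x)\le 2$ and $d^+_D(x)\le 2$, so $D$ is a $\langle 2,2\rangle$ digraph.

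It remains to check that $CCE(D)=G_1\cup G_2$. The key observation is that since $A(D)$ contains no arc between $V(D_1)$ and $V(D_2)$, no vertex can be a common prey or common predator of one vertex in $V(D_1)$ and one in $V(D_2)$. Hence every edge of $CCE(D)$ has both endpoints in the same $V(D_i)$, and for such a pair the common-prey/common-predator condition in $D$ is exactly the condition in $D_i$. Thus the edge set of $CCE(D)$ equals $E(G_1)\cup E(G_2)$, giving $CCE(D)=G_1\cup G_2$ as desired.

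There is essentially no obstacle here; the only thing to make sure of is the disjointness argument that rules out spurious edges crossing between the two parts, which follows directly from $A(D_1)\cap(V(D_1)\times V(D_2))=\emptyset$ (and similarly for $D_2$).
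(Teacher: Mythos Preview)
Your proposal is correct and follows exactly the same approach as the paper: take witness $\langle 2,2\rangle$ digraphs $D_1$ and $D_2$ on disjoint vertex sets and form their disjoint union $D$, noting that degree bounds are preserved and that no cross-edges can arise in $CCE(D)$. The paper states this in a single sentence just before the lemma, while you have spelled out the details more fully.
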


We denote $k$ path components $P_m$ of a graph by $kP_m$ for positive integers $k \geq 2$ and $m$.

\begin{proposition}\label{prop:Pm,Pm,Pn}
	For positive integers $m$ and $n$, $2P_m \cup P_n$ is a $\langle 2,2 \rangle$ CCE graph.
\end{proposition}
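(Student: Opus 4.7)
The plan is to extend the construction used in the proof of Proposition~\ref{prop:Pm,Pn}: take a long cyclic $\langle 2,2 \rangle$ digraph whose CCE graph is $C_{2m+n}$ and remove two well-chosen arcs so that the resulting CCE graph splits into three vertex-disjoint paths on $m$, $m$, and $n$ vertices. I first dispatch the degenerate case $m=1$: since $P_1 \cup P_n$ is a $\langle 2,2 \rangle$ CCE graph by Proposition~\ref{prop:Pm,Pn} and the trivial path $P_1$ (realized by a one-vertex, arcless digraph) is obviously a $\langle 2,2 \rangle$ CCE graph, Lemma~\ref{lem:union} applied to $P_1$ and $P_1 \cup P_n$ gives $2P_1 \cup P_n$ as a $\langle 2,2 \rangle$ CCE graph. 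So from now on assume $m \geq 2$.

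For the main case $m \geq 2$, set
\[
D := D_{v,\, 2m+n}^{\stackrel{m-1}{\curvearrowright}} \;-\; \{(v_1, v_m),\, (v_{m+1}, v_{2m})\}.
\]
The exponent $t = m-1$ satisfies $1 \leq t \leq 2m+n-2$, so $D_{v,\,2m+n}^{\stackrel{m-1}{\curvearrowright}}$ is a well-defined $\langle 2,2 \rangle$ digraph, and deleting arcs preserves the $\langle 2,2 \rangle$ property. The identities $v^+_{i,i+1} = v_{i+m}$ and $v^-_{i,i+1} = v_{i-m+1}$ in this cyclic digraph (with indices modulo $2m+n$) give $v^+_{2m+n,\,1} = v_m$ and $v^-_{m,\,m+1} = v_1$, so by exactly the local edge-tracking argument used in the proof of Proposition~\ref{prop:Pm,Pn}, removing the arc $(v_1, v_m)$ destroys precisely the two edges $v_{2m+n} v_1$ and $v_m v_{m+1}$ of $C_{v,\,2m+n}$. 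Shifting the same analysis by $m$ indices, removing $(v_{m+1}, v_{2m})$ destroys only the edges $v_m v_{m+1}$ (already absent) and $v_{2m} v_{2m+1}$. Consequently $CCE(D)$ is $C_{v,\,2m+n}$ with the three edges $v_{2m+n} v_1$, $v_m v_{m+1}$, $v_{2m} v_{2m+1}$ removed, that is, the vertex-disjoint union of the three paths $v_1 \cdots v_m$, $v_{m+1} \cdots v_{2m}$, and $v_{2m+1} \cdots v_{2m+n}$, which is exactly $2P_m \cup P_n$.

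The only non-routine aspect is verifying that no other edges of $C_{v,\,2m+n}$ are inadvertently destroyed by the two arc deletions. This is a purely local check of common-prey and common-predator status around the four vertices $v_1, v_m, v_{m+1}, v_{2m}$, fully analogous to the bookkeeping already carried out in the proof of Proposition~\ref{prop:Pm,Pn}; I do not anticipate any genuine new obstacle, as the second arc deletion is merely a translate by $m$ of the first.
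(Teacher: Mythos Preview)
Your proposal is correct and follows essentially the same approach as the paper: the degenerate case $m=1$ is handled via Proposition~\ref{prop:Pm,Pn} together with Lemma~\ref{lem:union}, and for $m\ge 2$ you use exactly the same digraph $D=D_{v,2m+n}^{\stackrel{m-1}{\curvearrowright}}-(v_1,v_m)-(v_{m+1},v_{2m})$ and the same edge-tracking via \eqref{eq:D_{v,m}(t)} to conclude that $CCE(D)\cong 2P_m\cup P_n$. Your remark that the second deletion is a translate of the first and that only the three listed edges are lost is precisely the content of the paper's argument.
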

	\begin{proof}
		Fix positive integers $m$ and $n$.
		Since $P_n \cup P_1$ is the CCE graph of a $\langle 2,2 \rangle$ digraph $D$ by Proposition~\ref{prop:Pm,Pn}, $P_n \cup 2P_1$ is the CCE graph of the $\langle 2,2 \rangle$ digraph by Lemma~\ref{lem:union}.
		Now we assume $m \ge 2$.
		We consider the digraph \[D:=D_{v,2m+n}^{\stackrel{m-1}{\curvearrowright}}-(v_1,v_{m})-(v_{m+1},v_{2m}).\]
		Since $D_{v,2m+n}^{\stackrel{m-1}{\curvearrowright}}$ is a $\langle 2,2 \rangle$ digraph, $D$ is a $\langle 2,2 \rangle$ digraph.
		Note that by \eqref{eq:D_{v,m}(t)}, 
		\[v^+_{i,i+1}=v_{i+m}	\quad \text{and} \quad v^-_{i,i+1}=v_{i-m+1} \]
		in $D_{v,2m+n}^{\stackrel{m-1}{\curvearrowright}}$ for each integer $1\le i \le 2m+n$.
		Especially, $v^+_{2m+n,1}=v_{m}$, $v^-_{m, m+1}=v_1$, $v^+_{m,m+1}=v_{2m}$, and $v^-_{2m, 2m+1}=v_{m+1}$ in $D_{v,2m+n}^{\stackrel{m-1}{\curvearrowright}}$.
		Thus removing the arcs $(v_1,v_{m})$ and $(v_{m+1},v_{2m})$ from $D_{v,2m+n}^{\stackrel{m-1}{\curvearrowright}}$ deletes the edges $\{v_1,v_{2m+n}\}$, $\{v_m,v_{m+1}\}$, and $\{v_{2m},v_{2m+1}\}$ so that the CCE graph of $D$ is the union of paths 
		\[v_1v_2\cdots v_{m},\quad v_{m+1}v_{m+2} \cdots v_{2m}, \quad \text{and} \quad v_{2m+1}v_{2m+2}\cdots v_{2m+n}\]
		by \eqref{eq:CCE(D_{v,m}(t))}.
		Hence $CCE(D) \cong 2P_m \cup P_n$. 
	\end{proof}

\begin{proposition}\label{prop:Pl,Pm,Pn}
	For positive integers $l$, $m$, and $n$, $P_l \cup P_m \cup P_n$ is a $\langle 2,2 \rangle$ CCE graph.
	\end{proposition}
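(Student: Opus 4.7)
Plan: I construct a $\langle 2,2\rangle$ digraph $D$ on $l+m+n$ vertices with $CCE(D) = P_l\cup P_m\cup P_n$, extending the ideas of Propositions \ref{prop:Pm,Pn} and \ref{prop:Pm,Pm,Pn}. The argument splits into cases by the values of $l, m, n$.

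First, if $\min(l,m,n)=1$, say (without loss of generality) $n=1$, then Proposition \ref{prop:Pm,Pn} yields a $\langle 2,2\rangle$ digraph $D_1$ with $CCE(D_1) = P_l\cup P_m$, and the disjoint union of $D_1$ with a single-vertex digraph (whose CCE graph is $P_1$) realizes $P_l \cup P_m\cup P_1 = P_l \cup P_m \cup P_n$ by Lemma \ref{lem:union}. Similarly, if two of $l, m, n$ are equal, Proposition \ref{prop:Pm,Pm,Pn} applies directly after relabeling.

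For the main case where $l, m, n \geq 2$ are pairwise distinct, I mimic the cyclic construction of Proposition \ref{prop:Pm,Pm,Pn}. Setting $N = l+m+n$ and writing $D = D_{v, N}^{\stackrel{t}{\curvearrowright}} - S$ for an appropriate shift $t$ and arc set $S$, the goal is to choose $S$ so that exactly the three cycle edges $\{v_l, v_{l+1}\}$, $\{v_{l+m}, v_{l+m+1}\}$, $\{v_{l+m+n}, v_1\}$ of $CCE(D_{v,N}^{\stackrel{t}{\curvearrowright}}) = C_{v, N}$ are deleted. The identities \eqref{eq:D_{v,m}(t)} tell us that each arc removal $(v_k, v_{k+t})$ (resp.\ $(v_k, v_{k+t+1})$) deletes exactly two cycle edges, at cyclic distance $t+1$ (resp.\ $t$). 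By picking $t$ from $\{l-1, m-1, n-1, l, m, n\}$ to match the right cyclic gaps, and removing one or two arcs analogous to $(v_1, v_m)$ and $(v_{l+1}, v_{l+m})$ used in Proposition \ref{prop:Pm,Pm,Pn}'s construction, the three target edges can be eliminated with appropriate overlaps so that no additional cycle edge is affected. The three remaining vertex blocks $\{v_1,\ldots,v_l\}$, $\{v_{l+1},\ldots,v_{l+m}\}$, $\{v_{l+m+1},\ldots,v_N\}$ then support the three path components.

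The main obstacle is that when $l, m, n$ are pairwise distinct with no two differing by $1$, no single shift $t$ lines up $\{t, t+1\}$ with two of the cyclic gaps $l, m, n$ between the three target edges, so the two-arc-removal recipe of Proposition \ref{prop:Pm,Pm,Pn} does not extend verbatim. In such subcases I would either remove additional arcs, chosen so that each collateral deletion coincides with an already-targeted edge, or construct $D$ directly by assigning common preys and common predators to each edge of $P_l\cup P_m\cup P_n$ from scratch, leveraging the observation that the three paths together have $l+m+n-3$ edges and $l+m+n$ vertices, which is enough room to ``bounce'' the common prey/predator assignments cyclically among the three path components without violating the $\langle 2,2\rangle$ degree bounds. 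In every case, once $D$ is produced, verifying $CCE(D)=P_l\cup P_m\cup P_n$ reduces, as in the earlier propositions, to checking that each intended CCE edge still has a common predator and a common prey while each of the three target cycle edges loses at least one of them, which is a direct calculation using Propositions \ref{prop:property_degree2} and \ref{prop:component_cycle_neighbor}.
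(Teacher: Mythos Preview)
Your reductions for the easy cases (some path trivial, or two lengths equal) match the paper's and are fine. The gap is in the main case $1<l<m<n$ with $l,m,n$ pairwise distinct. You correctly observe that removing a single arc from $D_{v,N}^{\stackrel{t}{\curvearrowright}}$ always kills two cycle edges at cyclic distance $t$ or $t+1$, and hence that the two-arc recipe of Proposition~\ref{prop:Pm,Pm,Pn} cannot in general break $C_N$ at three prescribed positions whose pairwise gaps are $l,m,n$. But your proposed remedies---``remove additional arcs with overlapping collateral deletions'' or ``assign common preys and predators from scratch, bouncing cyclically''---are not constructions; they are hopes. For instance, with three arc removals one would need the three target edges to pair up at distances all lying in $\{t,t+1,N-t,N-t-1\}$, which already fails for, say, $(l,m,n)=(2,5,9)$. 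The counting remark that there are $l+m+n-3$ edges and $l+m+n$ vertices does not by itself guarantee a loopless $\langle 2,2\rangle$ assignment of both a common prey and a common predator to every edge.

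The paper resolves this case by an idea you do not reach: it abandons the single-cycle template and instead takes \emph{two} cyclic digraphs, $D_1=D_{u,l+m}^{\stackrel{l-1}{\curvearrowright}}$ and $D_2=D_{v,m+n}^{\stackrel{m-l}{\curvearrowright}}$, strips each down to a partial digraph ($D_3$ and $D_4$) by deleting a block of consecutive arcs, and then \emph{identifies} the $m$ vertices $u_{l+1},\dots,u_{l+m}$ with $v_{m+n},\dots,v_{n+1}$ in reverse order. The point is that the deletions are arranged so that on the identified vertices the in- and out-degrees from $D_3$ and $D_4$ are complementary (summing to $2$ each), and the surviving arcs supply exactly the common preys/predators needed for the three paths $u_1\cdots u_l$, $u_{l+1}\cdots u_{l+m}$, $v_1\cdots v_n$. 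This gluing of two shifted cycles, with shifts $l-1$ and $m-l$ tailored to the specific gaps, is the missing ingredient in your proposal.
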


\begin{proof}
Fix positive integers $l$, $m$, and $n$.
Without loss of generality, we may assume $l\leq m \leq n$.
If $l=m$ or $m=n$, then $P_l \cup P_m \cup P_n$ is a $\langle 2,2 \rangle$ CCE graph by
Proposition~\ref{prop:Pm,Pm,Pn}.
Now suppose $l < m <n$.
If $l=1$,
then $P_1 \cup P_m \cup P_n$ is a $\langle 2,2 \rangle$ CCE graph by Propositions~\ref{prop:Pm,Pn} and Lemma~\ref{lem:union}.
Thus it remains to consider the case \[1<l<m<n.\] 
Let $D_1= D_{u,l+m}^{\stackrel{l-1}{\curvearrowright}}$ and $D_2=D_{v,m+n}^{\stackrel{m-l}{\curvearrowright}}$.
We consider two digraphs \[D_3:=D_1-\{(u_{l+i}, u_{2l-1+i})\colon\, 1\le i \le m-l+1 \}-\{(u_{l+i}, u_{2l+i})\colon\, 1\le i \le m-l \} \]
and \[D_4:=D_2-\{(v_{n+i}, v_{n+m-l+i})\colon\, 1\le i \le l \}-\{(v_{n+i}, v_{n+m-l+1+i})\colon\, 1\le i \le l-1 \}\]
(see Figure~\ref{fig:prop:Pl,Pm,Pn}).
Now we obtain a digraph $D$ from digraphs $D_3$ and $D_4$ by identifying $u_{l+i}$ with $v_{m+n+1-i}$ for each $1 \le i \le m$.
Since $D_1$ and $D_2$ are $\langle 2,2 \rangle$ digraphs, $D_3$ and $D_4$ are $\langle 2,2 \rangle$ digraphs.
To show that $D$ is a $\langle 2,2 \rangle$ digraph, it suffices to check the outdegree and indegree of the vertices identified in $D_3$ and $D_4$.
By \eqref{eq:N}, we may check the following:
\begin{itemize}
	\item $d^-_{D_3}(u_{l+i})=2$ for $1\leq i\leq l-1$;
	$d^-_{D_3}(u_{2l})=1$; 
	$d^-_{D_3}(u_{2l+i})=0$ for $1\leq i \leq m-l$;
	\item $d^+_{D_3}(u_{l+i})=0$ for $1\leq i \leq m-l$; $d^+_{D_3}(u_{m+1})=1$, $d^+_{D_3}(u_{m+i+1})=2$ for $1 \leq i \leq l-1$; 
	\item $d^-_{D_4}(v_{n+i})=2$ for $1\leq i \leq m-l$; $d^-_{D_4}(v_{m+n-l+1})=1$; $d^-_{D_4}(v_{m+n-l+i+1})=0$ for $1\leq i \leq l-1$;
	\item $d^+_{D_4}(v_{n+i})=0$ for $1\leq i \leq l-1$; $d^+_{D_4}(v_{n+l})=1$; $d^+_{D_4}(v_{n+l+i})=2$ for $1\leq i \leq m-l$
\end{itemize}
Thus each vertex identified in $D_3$ and $D_4$ has outdegree $2$ and indegree $2$ in $D$.
Therefore $D$ is a $\langle 2,2 \rangle$ digraph.
By \eqref{eq:D_{v,m}(t)}, we may check the following:
\begin{itemize}
	\item $A(D_3)$ guarantees that $u_{m+i,m+1+i}^+=u_i$ and $u_{i,i+1}^-=u_{m+1+i}$ for $1\le i \le 2l-1$;
	\item $A(D_4)$ guarantees that $v_{n+l-1+i,n+l+i}^+=v_i$ and $v_{i,i+1}^-=v_{n+1+i}$ for $1\le i \le m+n-l$.
\end{itemize} 
Since we have identified $u_{l+i}$ with $v_{m+n+1-i}$ for each $1 \le i \le m$, $CCE(D)$ is the union of paths $u_1\cdots u_l$, $u_{l+1}\cdots u_{l+m}$, and $v_1\cdots v_n$.\end{proof}

\begin{lemma}\label{lem:only path components}
	Let $G$ be a disjoint union of paths.
	Then $G$ is a $\langle 2,2 \rangle$ CCE graph if and only if $G$ is not isomorphic to a nontrivial path.
\end{lemma}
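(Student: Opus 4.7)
The plan is to split the biconditional into its two directions and observe that both have essentially been packaged into the preceding results, so the proof consists mostly of assembling the pieces.

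For the forward direction, I would argue the contrapositive: if $G$ is a disjoint union of paths that happens to be a nontrivial path, then Lemma~\ref{lem:a path} says directly that $G$ is not a $\langle 2,2\rangle$ CCE graph. So any disjoint-union-of-paths $G$ which is a $\langle 2,2\rangle$ CCE graph cannot be a single nontrivial path.

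For the reverse direction, suppose $G$ is a disjoint union of paths and is not isomorphic to a nontrivial path. Let $k$ be the number of path components. The case $k=0$ is trivial, as the empty graph is the CCE graph of the empty digraph. If $k=1$, then by hypothesis the unique component is trivial, so $G=P_1$, realized by the one-vertex digraph with no arcs. The interesting case is $k\ge 2$. Here I would use the elementary fact that every integer $k\ge 2$ can be written as $2a+3b$ with $a,b\ge 0$ (if $k$ is even take $a=k/2$, $b=0$; if $k$ is odd use one triple and pair the remaining $k-3$). Partition the $k$ path components of $G$ accordingly into blocks of size $2$ or $3$. Each block of size $2$ is a $\langle 2,2\rangle$ CCE graph by Proposition~\ref{prop:Pm,Pn}, and each block of size $3$ is a $\langle 2,2\rangle$ CCE graph by Proposition~\ref{prop:Pl,Pm,Pn}. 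Iterated application of Lemma~\ref{lem:union} then shows that the disjoint union of all these blocks, which is exactly $G$, is a $\langle 2,2\rangle$ CCE graph.

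There is no real obstacle; all the constructive work has been done in Propositions~\ref{prop:Pm,Pn} and~\ref{prop:Pl,Pm,Pn}, and all the non-realizability work in Lemma~\ref{lem:a path}. The only subtlety is the need to avoid assembling $G$ by tacking on individual nontrivial path components one at a time (which would require each summand to itself be a $\langle 2,2\rangle$ CCE graph, contradicting Lemma~\ref{lem:a path}); grouping into blocks of two or three circumvents this, and this is exactly why one needs both Proposition~\ref{prop:Pm,Pn} and Proposition~\ref{prop:Pl,Pm,Pn} to be available before invoking Lemma~\ref{lem:union}.
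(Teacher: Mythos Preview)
Your proposal is correct and follows essentially the same approach as the paper: the forward direction is handled by Lemma~\ref{lem:a path}, and the reverse direction partitions the $k\ge 2$ components into blocks of size two or three, applies Propositions~\ref{prop:Pm,Pn} and~\ref{prop:Pl,Pm,Pn} to each block, and assembles them via Lemma~\ref{lem:union}. The paper's proof is slightly terser (it does not spell out the $k=2a+3b$ arithmetic), but the structure and the cited ingredients are identical.
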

\begin{proof}
The ``only if" part follows from Lemma~\ref{lem:a path}.
Suppose that $G$ is not isomorphic to a nontrivial path.
If $G$ is a trivial graph, then it is clear.
Assume that $G$ is not a trivial graph.
Then $G$ has at least $2$ path components and so the path components can be grouped into sets of size $2$ or $3$. 
If one of the grouped sets has a size of $2$,
then the two path components in it is a $\langle 2,2 \rangle$ CCE graph by Proposition~\ref{prop:Pm,Pn}.
Otherwise, the three path components in it is a $\langle 2,2 \rangle$ CCE graph by Proposition~\ref{prop:Pl,Pm,Pn}.
Thus, by Lemma~\ref{lem:union},
$G$ is a $\langle 2,2 \rangle$ CCE graph.
Therefore we have shown the ``if" part.
\end{proof}
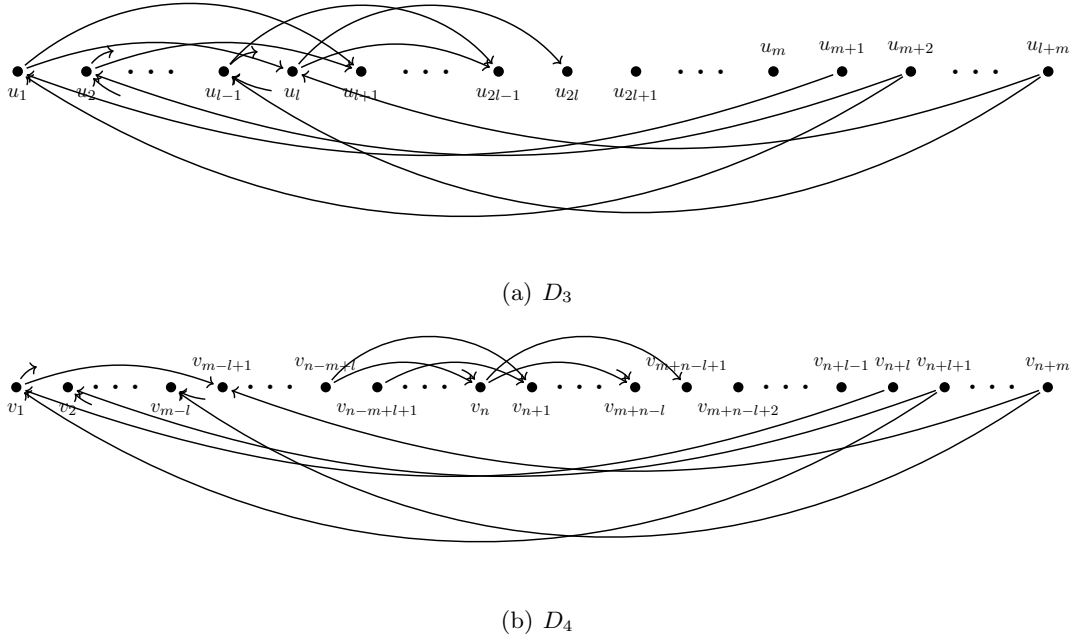
\begin{figure}[t]
			\begin{center}
			\subfigure[$D_3$]{
			\resizebox{0.9\textwidth}{!}{%
			\begin{tikzpicture}[scale=1.3]
			\tikzset{mynode/.style={inner sep=2pt,fill,outer sep=2.3pt,circle}}
			\node [mynode] (u1) at (-6,0) [label=below :$u_1$] {};
			\node [mynode] (u2) at (-5,0) [label=below :$u_2$] {};
			\node [mynode] (ul-1) at (-3,0) [label=below :$u_{l-1}$] {};
			\node [mynode] (ul) at (-2,0) [label=below :$u_{l}$] {};
			\node [mynode] (ul+1) at (-1,0) [label=below :$u_{l+1}$] {};
			\node [mynode] (u2l-1) at (1,0) [label=below :$u_{2l-1}$] {};
			\node [mynode] (u2l) at (2,0) [label=below :$u_{2l}$] {};
			\node [mynode] (u2l+1) at (3,0) [label=below :$u_{2l+1}$] {};
			\node [mynode] (ul+m) at (9,0) [label=above :$u_{l+m}$] {};
			\node (uu) at (-2.4,0.3){};
			\node (uuu) at (-4.4,-0.4){};
			\node (uuuu) at (-2.2,-0.3){};

			\node [mynode] (um) at (5,0) [label=above :$u_{m}$] {};
			\node [mynode] (um+1) at (6,0) [label=above :$u_{m+1}$] {};
			\node [mynode] (um+2) at (7,0) [label=above :$u_{m+2}$] {};
			\draw[->, thick] (u1) edge[bend left=20] (ul);
			\draw[->, thick] (u1) edge[bend left=40] (ul+1);
			\draw[->, thick] (u2) edge[bend left=20] (ul+1);
			\draw[->, thick] (u2) edge[bend left=20] (-4.6,0.3);
			\draw[->, thick] (ul-1) edge[bend left=50] (u2l-1);
			\draw[->, thick] (ul) edge[bend left=50] (u2l);
			\draw[->, thick] (ul) edge[bend left=25] (u2l-1);
			\draw[->, thick] (ul-1) edge[bend left=20] (uu);
			
			\draw[->, thick] (ul+m) edge[bend left=20] (ul);
			\draw[->, thick] (ul+m) edge[bend left=35] (ul-1);
			\draw[->, thick] (um+2) edge[bend left=20] (u2);
			\draw[->, thick] (um+2) edge[bend left=33] (u1);
			\draw[->, thick] (um+1) edge[bend left=20] (u1);
			\draw[->, thick] (uuu) edge[bend left=10] (u2);
			\draw[->, thick] (uuuu) edge[bend left=10] (ul-1);
			\path (u2) -- (ul-1) node [font=\Huge, midway] {$\dots$};
			\path (ul+1) -- (u2l-1) node [font=\Huge, midway] {$\dots$};
			\path (u2l+1) -- (um) node [font=\Huge, midway] {$\dots$};		
			\path (um+2) -- (ul+m) node [font=\Huge, midway] {$\dots$};

			\end{tikzpicture}
			}}
			\subfigure[$D_4$]{
			\resizebox{0.9\textwidth}{!}{%
			\begin{tikzpicture}[scale=1]
			\tikzset{mynode/.style={inner sep=2pt,fill,outer sep=2.3pt,circle}}
			\node [mynode] (v1) at (-9,0) [label=below :$v_{1}$] {};
			\node [mynode] (v2) at (-8,0) [label=below :$v_{2}$] {};
			\node [mynode] (vm-l) at (-6,0) [label=below :$v_{m-l}$] {};
			\node [mynode] (vm-l+1) at (-5,0) [label=above :$v_{m-l+1}$] {};
			\node [mynode] (vn-m+l) at (-3,0) [label=above :$v_{n-m+l}$] {};
			\node [mynode] (vn-m+l+1) at (-2,0) [label=below :$v_{n-m+l+1}$] {};
			\node [mynode] (vn) at (0,0) [label=below :$v_{n}$] {};
			\node [mynode] (vn+1) at (1,0) [label=below :$v_{n+1}$] {};
			\node [mynode] (vn+m-l) at (3,0) [label=below :$v_{m+n-l}$] {};
			\node [mynode] (vn+m-l+1) at (4,0) [label=above :$v_{m+n-l+1}$] {};
			\node [mynode] (vn+m-l+2) at (5,0) [label=below :$v_{m+n-l+2}$] {};
			\node [mynode] (vn+l-1) at (7,0) [label=above :$v_{n+l-1}$] {};
			\node [mynode] (vn+l) at (8,0) [label=above :$v_{n+l}$] {};
			\node [mynode] (vn+l+1) at (9,0) [label=above :$v_{n+l+1}$] {};
			\node [mynode] (vn+m) at (11,0) [label=above :$v_{n+m}$] {};

			\draw[->, thick] (v1) edge[bend left=18] (-8.6,0.4);
			\draw[->, thick] (v1) edge[bend left=20] (vm-l+1);
			\draw[->, thick] (vn-m+l) edge[bend left=30] (vn);
			\draw[->, thick] (vn-m+l) edge[bend left=50] (vn+1);
			\draw[->, thick] (vn-m+l+1) edge[bend left=30] (vn+1);
			\draw[->, thick] (vn) edge[bend left=50] (vn+m-l+1);
			\draw[->, thick] (vn) edge[bend left=30] (vn+m-l);
			\draw[->, thick] (vn+m) edge[bend left=20] (vm-l+1);
			\draw[->, thick] (vn+m) edge[bend left=35] (vm-l);
			\draw[->, thick] (vn+l+1) edge[bend left=34] (v1);
			\draw[->, thick] (vn+l+1) edge[bend left=20] (v2);
			\draw[->, thick] (vn+l) edge[bend left=20] (v1);
			\node (v) at (-7.4,-0.4){};
			\draw[->, thick] (v) edge[bend left=10] (v2);
			\node (vv) at (-5.2,-0.25){};
			\draw[->, thick] (vv) edge[bend left=10] (vm-l);
			\node (vvv) at (-0.5,0.4){};
			\draw[->, thick] (vvv) edge[bend left=15] (vn);	
			\node (vvvv) at (2.5,0.4){};
			\draw[->, thick] (vvvv) edge[bend left=15] (vn+m-l);

			\path (v2) -- (vm-l) node [font=\Huge, midway] {$\dots$};
			\path (vm-l+1) -- (vn-m+l) node [font=\Huge, midway] {$\dots$};
			\path (vn-m+l+1) -- (vn) node [font=\Huge, midway] {$\dots$};
			\path (vn+1) -- (vn+m-l) node [font=\Huge, midway] {$\dots$};
			\path (vn+m-l+2) -- (vn+l-1) node [font=\Huge, midway] {$\dots$};
			\path (vn+l+1) -- (vn+m) node [font=\Huge, midway] {$\dots$};
			\end{tikzpicture}
			}}
			\end{center}
			\caption{Digraphs in the proof of Proposition~\ref{prop:Pl,Pm,Pn}}
			\label{fig:prop:Pl,Pm,Pn}
			\end{figure}

Now we are ready to prove Theorem~\ref{thm:sumup}.

\begin{proof}[Proof of Theorem~\ref{thm:sumup}]
(b)$\Rightarrow$(a) is true by Lemmas~\ref{lem:a cycle},~\ref{lem:union}, and~\ref{lem:only path components}.

To show (a)$\Rightarrow$(b), suppose that $G$ is the CCE graph of a $\langle 2,2 \rangle$ digraph $D$.
	By Proposition~\ref{prop:degree}, $G$ is a disjoint union of paths and cycles.
	To the contrary, suppose that $P_{u,l}$ is the unique path component of $G$ for an integer $l\ge 2$.
	Since a nontrivial path cannot be the CCE graph of a $\langle 2,2 \rangle$ digraph by Lemma~\ref{lem:a path}, some consecutive vertices of $P_{u,l}$ have a common prey or a common predator on a component $C$ of $G$ except $P_{u,l}$.
	Thus $C=C_{v,m}$ for some positive integer $m\ge 3$ since $P_{u,l}$ is the unique path component of $G$.
	If some consecutive vertices of $P_{u,l}$ have a common prey  on $C_{v,m}$, then 	
	they have a common predator on $C_{v,m}$ in $D^{\leftarrow}$ and so it suffices to consider $D^{\leftarrow}$ instead of $D$. 
	Without loss of generality, we may assume that some consecutive vertices of $P_{u,l}$ have a common predator on $C_{v,m}$.  
		That is, $v_j \to u_i$ and $v_j \to u_{i+1}$ for some $i \in \{1, \ldots, l-1\}$ and $j \in \{1, \ldots, m\}$.
Without loss of generality, we may assume that $i$ is the minimum among such the indices. 
Since we may cyclically permute the sequence of $C_{v,m}$ to have a cycle isomorphic to $C_{v,m}$, we may assume $j=2$ and $v_1 \to u_i$.
If $i > 1$, then $v_{1} \to u_{i-1}$ since $u_i$ is a common prey of $v_{1}$ and $v_2$ and $v_2 \to u_{i+1}$, which contradicts the choice of $i$.
Thus $i=1$.
Then 
\[ v_1 \to u_1,\quad v_2 \to u_1, \quad \text{and} \quad v_2 \to u_2.\]
Now $v_{1}$ has a prey $w$ other than $u_1$ and, by Proposition~\ref{prop:two_degree_property}, $w$ must be an end vertex on some path component in $G$.
 Since $P_{u,l}$ is the only path component in $G$,
  $w$ is an end vertex of $P_{u,l}$ and so $w=u_l$.
That is,\[
 	v_1 \to u_l
\]
If $l=2$, then $N^+(v_1)=N^+(v_2)$, which contradicts Proposition~\ref{prop:property_degree2}(ii).
Thus \[ l \geq 3.\]
  Then, by Proposition~\ref{prop:key1},
\begin{equation}\label{eq:lem:only path components_1}
v^+_{i,i+1}=u_{i}
\end{equation} for each $1\leq i \leq \min(l-1,m)$.
To show $l\leq m$,
suppose, to the contrary, that $l > m$.
 Then, by \eqref{eq:lem:only path components_1}, $v^+_{m-2,m-1}=u_{m-2}$ and $v^+_{m-1,m}=u_{m-1}$. 
 Then $v_m$ is the only possible common predator of $u_{m-1}$ and $u_m$.
 Thus $v_m \to u_m$ and so $v_m=u^-_{m-1,m}$.
 However, since $N^+(v_1)=\{u_1,u_l\}$, $v_1$ and $v_m$ have no common prey and so $v_1$ and $v_m$ cannot be adjacent in $G$, which is a contradiction.
 Therefore $l\leq m$.
 Then, by \eqref{eq:lem:only path components_1},
 $v^+_{l-2,l-1}=u_{l-2}$ and so 
 $v_{l-1}=u^-_{l-2,l-1}$.
 Thus $v_l$ is the only possible common predator of $u_{l-1}$ and $u_l$ and so $v_l \to u_l$.
 Then $v^+_{l,l+1}=u_l$.
 Therefore any vertex on $P_{u,l}$
 has two predators on $C_{v,m}$, which implies that two consecutive vertices of $P_{u,l}$ requiring a common prey cannot have it on $P_{u,l}$. 
 Hence some consecutive vertices of $P_{u,l}$ have a common prey on a cycle component.
 Noting that $CCE(D^{\leftarrow})=CCE(D)$, we may also conclude that some consecutive vertices of $P_{u,l}$ have a common predator on a cycle component $C'$ in $CCE(D^{\leftarrow})$.
Then the situation has become identical to the one discussed earlier for $D$. 
Therefore the end vertices $u_1$ and $u_l$ of $P_{u,l}$ have a common predator in $D^{\leftarrow}$ lying on the cycle $C'$ in $CCE(D^{\leftarrow})$. 
Then $u_1$ and $u_l$ have a common prey in $D$.
%
%
%
%
%
Consequently, we have shown that $u_1$ and $u_l$ have a common prey and a common predator in $D$. 
Hence $u_1$ and $u_l$ are adjacent in $G$, which contradicts the fact that $P_{u,l}$ is a path in $G$.
\end{proof}

\section{A proof of Theorem~\ref{thm:interval}}\label{sect:(2,2)}
Theorem~\ref{thm:interval} specifically pertains to the CCE graph of an acyclic  $\langle 2,2 \rangle$ digraph, so in this section, we will focus solely on (2,2) digraphs.
Since the CCE graphs of $(2,2)$ digraphs with at most two vertices are empty graphs, 
Theorem~\ref{thm:interval} is trivially true for $(2,2)$ digraphs with at most two vertices.
In this vein, we only consider $(2,2)$ digraphs with at least three vertices unless otherwise mentioned.

We call a vertex of indegree $0$ (resp.\ outdegree $0$) in a digraph $D$ a {\em source} (resp.\ {\em sink}) of $D$.
It is a well-known fact that
if a digraph $D$ is acyclic, then $D$ has a sink and a source.
Each sink and each source of a digraph form isolated vertices in its CCE graph.

Given a family $\mathcal{D}$ of digraphs, we say that a digraph $D$ in $\mathcal{D}$ is {\it minimal} in $\mathcal{D}$ if there is no proper subdigraph $D'$ of $D$ in $\mathcal{D}$ such that $CCE(D)=CCE(D')$. 
By the Well-Ordering Axiom, the following proposition is true.

\begin{proposition}\label{prop:existence_minimal}
For a $(2,2)$ CCE graph $G$ and the set $\mathcal{D}_G$ of $(2,2)$ digraphs whose CCE graphs are $G$,
there exists a minimal digraph in $\mathcal{D}_G$.
\end{proposition}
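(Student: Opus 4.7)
The plan is to apply the Well-Ordering Axiom to the arc counts of the digraphs in $\mathcal{D}_G$. First I would observe that $\mathcal{D}_G$ is nonempty, since by hypothesis $G$ is a $(2,2)$ CCE graph and therefore is realized by at least one $(2,2)$ digraph. Next I would pin down what ``proper subdigraph'' means within $\mathcal{D}_G$: by the definition of CCE graph, every $D \in \mathcal{D}_G$ satisfies $V(D) = V(CCE(D)) = V(G)$, so all members of $\mathcal{D}_G$ share the same vertex set. Consequently, if $D, D' \in \mathcal{D}_G$ and $D'$ is a proper subdigraph of $D$, then necessarily $A(D') \subsetneq A(D)$, and in particular $|A(D')| < |A(D)|$.

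With these preliminaries in place, I would form the set
\[
S \;=\; \{\,|A(D)| : D \in \mathcal{D}_G\,\} \;\subseteq\; \mathbb{Z}_{\ge 0},
\]
which is nonempty. By the Well-Ordering Axiom, $S$ has a least element $k$; pick any $D^{*} \in \mathcal{D}_G$ with $|A(D^{*})| = k$. Suppose for contradiction that $D^{*}$ is not minimal in $\mathcal{D}_G$. Then there exists a proper subdigraph $D'$ of $D^{*}$ with $D' \in \mathcal{D}_G$ and $CCE(D') = CCE(D^{*}) = G$. By the observation above, $|A(D')| < k$, contradicting the minimality of $k$ in $S$. Hence $D^{*}$ is a minimal digraph in $\mathcal{D}_G$.

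I do not foresee any real obstacle in this argument; the only subtle point is the remark that every digraph in $\mathcal{D}_G$ has vertex set exactly $V(G)$, which is what reduces the partial order of subdigraph containment on $\mathcal{D}_G$ to the linear well-order on $\mathbb{Z}_{\ge 0}$ given by arc counts. Once that is in hand, the Well-Ordering Axiom finishes the proof in one line.
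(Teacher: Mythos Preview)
Your argument is correct and follows exactly the approach the paper indicates: the paper simply asserts the proposition ``by the Well-Ordering Axiom,'' and your proof spells out the details of that invocation by minimizing $|A(D)|$ over $\mathcal{D}_G$. The observation that all members of $\mathcal{D}_G$ share the vertex set $V(G)$, so that a proper subdigraph must have strictly fewer arcs, is precisely the point that makes the well-ordering argument go through.
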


Given a $(2,2)$ CCE graph $G$,
we say that
a digraph $D$ is a {\it minimal digraph of $G$}
if $D$ is a minimal digraph among the $(2,2)$ digraphs whose CCE graphs are $G$.
By Proposition~\ref{prop:existence_minimal}, for any $(2,2)$ CCE graph $G$, there exists a minimal digraph of $G$.

It is easy to check that if $D$ is a minimal digraph of a $(2,2)$ CCE graph $G$, then   
$D^{\leftarrow}$ is also a minimal digraph of $G$.
Therefore the proof of Proposition~\ref{prop:reverse_property} can be directly applied to establish the following result.

\begin{proposition}\label{prop:reverse_property_minimal}
Let $G$ be a $( 2,2 )$ CCE graph and $\mathcal{D}$ be the collection of minimal digraphs each of whose CCE graph is $G$.
If the statement $\alpha$ is a property satisfied by each digraph in $\mathcal{D}$,
then the statement $\alpha^{\leftarrow}$ obtained from $\alpha$ by replacing the term `prey' (resp.\ `predator') with the term `predator' (resp.\ `prey') is also satisfied by each digraph in $\mathcal{D}$.
\end{proposition}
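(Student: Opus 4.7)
The plan is to mirror the proof of Proposition~\ref{prop:reverse_property} almost verbatim, with the single extra ingredient being that minimality of a digraph in $\mathcal{D}_G$ is preserved under arc-reversal. Concretely, I would first establish the claim that $D\in\mathcal{D}$ implies $D^{\leftarrow}\in\mathcal{D}$, and then use this exactly as the earlier proof used $D^{\leftarrow}\in\mathcal{D}_G$.

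For the key claim, fix $D\in\mathcal{D}$. Since $CCE(D^{\leftarrow})=CCE(D)=G$ and the indegree/outdegree bounds of a $\langle 2,2\rangle$ digraph are symmetric under arc-reversal, $D^{\leftarrow}$ is a $(2,2)$ digraph with CCE graph $G$, i.e., $D^{\leftarrow}\in\mathcal{D}_G$. For minimality, suppose there were a proper subdigraph $D'$ of $D^{\leftarrow}$ lying in $\mathcal{D}_G$. Because $\cdot^{\leftarrow}$ is an involution that commutes with taking subdigraphs, $(D')^{\leftarrow}$ is a proper subdigraph of $D$, and $CCE((D')^{\leftarrow})=CCE(D')=G$, contradicting the minimality of $D$. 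Hence $D^{\leftarrow}$ is minimal in $\mathcal{D}_G$, so $D^{\leftarrow}\in\mathcal{D}$.

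Once this is in hand, the rest is formally identical to the proof of Proposition~\ref{prop:reverse_property}. Assume $\alpha$ is satisfied by every digraph in $\mathcal{D}$. Pick an arbitrary $D\in\mathcal{D}$; then $D^{\leftarrow}\in\mathcal{D}$ by the preceding paragraph, so $D^{\leftarrow}$ satisfies $\alpha$. Since in $D^{\leftarrow}$ the roles of prey and predator are exchanged relative to $D$ (that is, $x$ is a prey of $y$ in $D^{\leftarrow}$ iff $x$ is a predator of $y$ in $D$), the fact that $D^{\leftarrow}$ satisfies $\alpha$ is exactly the statement that $D$ satisfies $\alpha^{\leftarrow}$. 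As $D$ was arbitrary, $\alpha^{\leftarrow}$ holds for every digraph in $\mathcal{D}$.

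The only genuinely new content beyond the earlier proof is the minimality-preservation lemma, and the main (mild) obstacle is just verifying it cleanly: one must observe both that $\cdot^{\leftarrow}$ is an involution and that it respects the sub-digraph order, so that a hypothetical proper sub-digraph witnessing non-minimality of $D^{\leftarrow}$ pulls back to one for $D$. Beyond that, no further argument is needed.
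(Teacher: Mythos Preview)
Your proposal is correct and takes essentially the same approach as the paper: the paper simply remarks that if $D$ is a minimal digraph of $G$ then $D^{\leftarrow}$ is also a minimal digraph of $G$, and then says the proof of Proposition~\ref{prop:reverse_property} applies verbatim. Your write-up fills in the minimality-preservation step more carefully (via the involution argument), which is fine; the only small omission is that in verifying $D^{\leftarrow}$ is a $(2,2)$ digraph you should also note that acyclicity is preserved under arc-reversal, not just the degree bounds.
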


\begin{proposition} \label{prop:minimal_property}
Let $D$ be a minimal digraph of a $(2,2)$ CCE graph $G$.
Then the following are true:
\begin{enumerate} [(i)]
\item if a vertex $v$ has exactly one predator (resp.\ one prey), then $v$ has degree $1$ in $G$ and
    the predator (resp.\ the prey) of $v$ has the other prey (resp.\ predator) that is adjacent to $v$ in $G$;
\item if a vertex $v$ has two predators (resp.\ two prey), then $v$ has degree $2$ or the predators (resp.\ the prey) of $v$ are adjacent in $G$;
\item any two distinct vertices have at most one common prey and at most one common predator. 

\end{enumerate}
\end{proposition}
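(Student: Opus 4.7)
The common thread for all three parts is minimality: each arc of $D$ must be essential for some edge of $CCE(D)=G$, in the sense that its removal kills at least one edge. My plan is, for each arc relevant to a claim, to enumerate the edges it could conceivably support and to read off the uniqueness constraints thereby imposed.

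For (i), I would fix the unique predator $w$ of $v$. Any edge $vu$ of $G$ requires a common predator of $v$ and $u$, which must be $w$; the bound $d^+(w)\le 2$ then gives $\deg_G(v)\le 1$. For the matching lower bound, minimality of the arc $(w,v)$ supplies an edge of $G$ it is essential for. An edge of the form $wy$ supported by this arc would need $v$ as a common prey of $w$ and $y$, forcing $y\in N^-(v)=\{w\}$, which is absurd. Hence the essential edge is of the form $vu$, and the common predator requirement forces $u$ to be the second out-neighbor of $w$.

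For (ii), I would write $N^-(v)=\{w_1,w_2\}$ and, for each $i\in\{1,2\}$, classify the edge that makes $(w_i,v)$ essential into two types. In type A, $(w_i,v)$ is essential for an edge $vu_i$; for removing the arc to actually kill this edge, $w_i$ must be the \emph{unique} common predator of $v$ and $u_i$, which forces $w_{3-i}\not\to u_i$. In type B, $(w_i,v)$ is essential for an edge whose other endpoint is a predator of $v$, i.e., for $w_1w_2$, with $v$ the unique common prey; this immediately gives $w_1w_2\in E(G)$. If either arc is of type B, we are done; if both are of type A, then the conditions $w_{3-i}\not\to u_i$ force $u_1\ne u_2$, so $\deg_G(v)\ge 2$, and Proposition~\ref{prop:degree} pins this down to $2$.

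For (iii), I will argue by contradiction: if distinct $u,v$ share two common prey $x_1,x_2$, the degree bounds give $N^+(u)=N^+(v)=\{x_1,x_2\}$ and $N^-(x_i)=\{u,v\}$ for $i=1,2$. I then show $(u,x_1)$ is redundant by exhausting the edges it can support: the neighborhood analysis restricts these to $uv$ (supported via common prey $x_1$, still supported by $x_2$ after deletion) and $x_1x_2$ (supported via common predator $u$, still supported by $v$ after deletion), so the removal preserves $CCE(D)$, contradicting minimality. The statement for common predators then follows from Proposition~\ref{prop:reverse_property_minimal}. The principal subtlety lies in (ii): a single arc may participate in several potential edges simultaneously, so the case split must be done carefully around which common predator or common prey becomes the unique one after deletion. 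Once this uniqueness bookkeeping is in place, the three claims should fall out cleanly.
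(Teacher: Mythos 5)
Your proof is correct and takes essentially the same route as the paper's: each part is established by showing that, under the negation of the claimed conclusion, some single arc of $D$ could be deleted without destroying any edge of $CCE(D)$, contradicting minimality (with the predator/prey symmetry handled via Proposition~\ref{prop:reverse_property_minimal}, as the paper also does). The only cosmetic difference is that you organize (i) and (ii) by classifying which edge each incoming arc of $v$ is ``essential for,'' whereas the paper directly exhibits the redundant arc; the content is the same.
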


\begin{proof}
By Proposition~\ref{prop:reverse_property_minimal}, for showing (i) and (ii), it is sufficient to handle the case where a vertex $v$ has indegree $1$ or $2$.

To show part (i),
suppose that $v$ has indegree $1$ in $D$.
Then, since $D$ is a $(2,2)$ digraph,
$v$ has degree at most $1$ in $G$.
Suppose that $v$ has degree $0$ in $G$.
Then $CCE(D') = G$ for
the subdigraph $D'$ with $V(D')=V(D)$ and $A(D') \subsetneq A(D)$ obtained from deleting the incoming arc to $v$, which contradicts the minimality of $D$.
Therefore $v$ has degree $1$ in $G$.
Thus the predator of $v$ has the other prey that is adjacent to $v$ in $G$.

To verify part (ii),
we suppose that $v$ has indegree $2$ in $D$.
Let $w$ and $x$ be the predators of $v$.
Assume that $v$ has degree at most $1$, and $w$ and $x$ are not adjacent in $G$.
Then deleting any arc of $(w,v)$ and $(x,v)$ does not change the adjacency between $w$ and $x$.
Moreover, since $v$ has degree at most $1$ and $D$ is a $(2,2)$ digraph,
we may delete one arc of $(w,v)$ and $(x,v)$ so that
the degree of $v$ stays the same in the CCE graph of the resulting digraph $D'$.
Thus $A(D') \subsetneq A(D)$ and $CCE(D')=G$, which contradicts the minimality of $D$.
Hence $v$ has degree $2$ or $w$ and $x$ are adjacent in $G$.

To show part (iii),
suppose to the contrary that there are two distinct vertices $u_1$ and $u_2$ such that they have at least two common prey or at least two common predators.
If $u_1$ and $u_2$ have two common predators, then the predators have $u_1$ and $u_2$ as common prey.
Thus we may assume that
 $u_1$ and $u_2$ have at least two common prey $v_1$ and $v_2$.
 Since $D$ is a $(2,2)$ digraph,
 \[N^+_D(u_1)=N^+_D(u_2)=\{v_1,v_2\}\]
 and 
  \[N^-_D(v_1)=N^-_D(v_2)=\{u_1,u_2\}.\]
  Then the pairs that may be affected by deleting the arc $(u_1,v_1)$ from $D$ are $\{u_1, u_2\}$ and $\{v_1, v_2\}$.
  Yet, the adjacency of $u_1$ and $u_2$ is preserved by the arcs $(u_1,v_2)$ and $(u_2,v_2)$ and that of $v_1$ and $v_2$ is preserved by the arcs $(u_2,v_1)$ and $(u_2,v_2)$.
  Therefore the CCE graph of the digraph $D-(u_1,v_1)$ is isomorphic to $G$, which contradicts the fact that $D$ is minimal.
\end{proof}

\begin{remark}
By Proposition~\ref{prop:minimal_property}(iii),
two adjacent vertices in the $(2,2)$ CCE graph $G$ of a minimal digraph have a unique common predator and a unique common prey.
Therefore the notations $v^-_{1,2}$ and $v^+_{1,2}$ may be used for the common predator and the common prey of those two vertices on a component $P_{v,2}$ of $G$.
\end{remark}

\begin{proposition}\label{prop:(2,2)}
	Let $G$ be the CCE graph of a $(2,2)$ digraph $D$.
	If there is a cycle in $G$, then there is no arc between its vertices.
\end{proposition}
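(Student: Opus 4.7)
The plan is to derive a contradiction with the acyclicity of $D$ by showing that an arc between two cycle-vertices of $C$ would force a directed cycle in $D$. Let $C:v_1v_2\cdots v_m v_1$ be a cycle in $G=CCE(D)$ with $m\ge 3$. Since every vertex on $C$ has degree $2$ in $G$, Proposition~\ref{prop:property_degree2}(i) gives $d^+(v_k)=d^-(v_k)=2$ in $D$, and Proposition~\ref{prop:property_degree2}(ii) together with Proposition~\ref{prop:component_cycle_neighbor}(i) yields $N^+(v_k)=\{v^+_{k-1,k},v^+_{k,k+1}\}$ and $N^-(v_k)=\{v^-_{k-1,k},v^-_{k,k+1}\}$, with all indices taken modulo $m$.

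Suppose for contradiction that $(v_i,v_j)\in A(D)$ for distinct $v_i,v_j$ on $C$. Then $v_j\in N^+(v_i)$, so $v_j$ equals $v^+_{i-1,i}$ or $v^+_{i,i+1}$; by reversing the cyclic orientation of $C$ if needed, I may assume $v_j=v^+_{i,i+1}$. Because $v_i$ and $v_{i+1}$ are both predators of $v_j$ and $d^-(v_j)=2$, I conclude $\{v^-_{j-1,j},v^-_{j,j+1}\}=\{v_i,v_{i+1}\}$, which splits the argument into two cases.

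In the aligned case $v^-_{j-1,j}=v_i$ and $v^-_{j,j+1}=v_{i+1}$, the arcs $v_i\to v_{j-1}$ and $v_{i+1}\to v_{j+1}$ together with the outdegree-$2$ constraint force $v^+_{i-1,i}=v_{j-1}$ and $v^+_{i+1,i+2}=v_{j+1}$; propagating inductively around $C$---at each stage the indegree-$2$ constraint at $v_{j+k}$ identifies its second predator as $v_{i+k+1}$, and then the outdegree-$2$ constraint determines the next common prey---I obtain $v_{i+k}\to v_{j+k}$ for every integer $k$. Setting $d=j-i\not\equiv 0\pmod m$, iterating the arc $v_\ell\to v_{\ell+d}$ produces a directed cycle $v_i\to v_{i+d}\to v_{i+2d}\to\cdots\to v_i$ of length $m/\gcd(m,d)\ge 2$ in $D$. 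In the crossed case $v^-_{j-1,j}=v_{i+1}$ and $v^-_{j,j+1}=v_i$, a symmetric induction yields $v_{i+k}\to v_{j-k},\,v_{j-k+1}$ for every $k$; taking $k=j-i$ gives $v_j\to v_i$, so $v_i\to v_j\to v_i$ is a directed $2$-cycle in $D$. Both conclusions contradict the acyclicity of $D$. The main obstacle is verifying the inductive step cleanly in both cases; the key is that at each new stage the indegree-$2$ constraint at the freshly considered prey has only one predator slot left, so the next predator---and hence the next common prey via the outdegree-$2$ constraint---is pinned down uniquely.
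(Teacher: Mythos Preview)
Your argument follows the same overall strategy as the paper: propagate the single arc $(v_i,v_j)$ around the cycle using the in/outdegree constraints to force a rigid arc pattern on $V(C)$, then contradict acyclicity. The paper packages the propagation by repeatedly invoking Theorem~\ref{thm:key2} on the paths $C_{v,m}-v_kv_{k+1}$ and concludes that the induced subdigraph on $V(C)$ has every vertex of outdegree~$2$, hence no sink; you instead run the induction directly and produce an explicit directed cycle $v_i\to v_{i+d}\to v_{i+2d}\to\cdots\to v_i$ in the aligned case. Both routes are equivalent.

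There is one genuine slip in your crossed case. Your induction $v_{i+k}\to v_{j-k},\,v_{j-k+1}$ cannot be carried all the way to $k=j-i$: with $d=j-i\in\{2,\dots,m-1\}$, at $k=\lfloor d/2\rfloor$ or $k=\lceil d/2\rceil$ one of the two asserted arcs has coinciding head and tail (namely when $2k\equiv d$ or $2k\equiv d+1 \pmod m$), so the step produces a \emph{loop} in $D$ strictly before $k=d$. Thus the $2$-cycle $v_i\to v_j\to v_i$ you claim is never reached. The fix is immediate---a loop already contradicts the simplicity of $D$, so the crossed case dies there---and this is precisely how the paper disposes of that orientation inside the proof of Theorem~\ref{thm:key2} (the vertex $v_{\lfloor (t+2)/2\rfloor}$ acquires a loop). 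So your approach is correct once you replace ``directed $2$-cycle'' with ``loop'' in the crossed case.
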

\begin{proof}
To the contrary, suppose that there is a cycle $C_{v,m}$ in $G$ and an arc $(v_i,v_j)$ exists in $D$ for some $m\ge 3$ and $i,j \in \{1,\ldots,m\}$.
Since each vertex on $C_{v,m}$ has degree $2$, $v_i$ is a common predator of $v_j$ and one of $v_{j-1}$ and $v_{j+1}$.
By symmetry, we may assume $j=1$, and $v_1$ and $v_2$ are prey of $v_i$. 
Then $2<i \le m$.
	By applying Theorem~\ref{thm:key2} to $C_{v,m}-v_1v_m$, we have $v_i$, $\cdots$, $v_m$ as common prey of $v_1$ and $v_2$, $\cdots$, $v_{m-i+1}$ and $v_{m-i+2}$, respectively.
	By Theorem~\ref{thm:key2} applied to the path $C_{v,m}-v_1v_2$,  $v_1$ is a common prey of $v_{m-i+2}$ and $v_{m-i+3}$.
	By applying the same theorem to $C_{v,m}-v_2v_3, \ldots, C_{v,m}-v_{i-1}v_i$ repeatedly, we may obtain an arc set of $D$
	\[A:= \bigcup_{k=1}^{m}\{(v_k, v_{k+i-1}),(v_{k+1}, v_{k+i-1})\}.\]
	We consider the subgraph $D'$ of $D$ induced by $A$.
	Then each vertex of $D'$ has outdegree $2$ and so $D'$ has no sink.
	Therefore $D'$ is not acyclic, which contradicts that $D$ is acyclic. 
\end{proof}

Given a vertex set $X$ of a digraph $D$, we denote by $N^+(X)$ and $N^-(X)$ the sets \[\{v \in V(D) \mid (x,v) \in A(D), \ x\in X, \ v \notin X\} \quad \text{and} \quad \{v \in V(D) \mid (v,x) \in A(D),\ x\in X, \ v \notin X\},\]respectively.

\begin{lemma} \label{lem:cycle-char}
Let $G$ be the $CCE$ graph of a $(2,2)$ digraph $D$. Suppose that $G$ has a cycle $C$ of length $m$ for some $m \geq 3$.
Then $|N^+(C) \cup N^-(C) | \geq m+3$
   and $|N^+(C) \cap N^-(C) | \leq m-3$.
\end{lemma}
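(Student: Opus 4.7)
The plan is to show that the two desired inequalities are equivalent and then to prove the one about the intersection using the acyclicity of $D$.

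First I would observe that $|N^+(C)| = |N^-(C)| = m$. By Proposition~\ref{prop:(2,2)}, $D$ has no arc joining two vertices of $C$, so every prey of every $v_i$ on $C$ lies outside $V(C)$; since each $v_i$ has degree $2$ in $G$, Proposition~\ref{prop:property_degree2} forces $d^+_D(v_i) = d^-_D(v_i) = 2$ and identifies the two prey of $v_i$ as the common prey $v^+_{i-1,i}$ and $v^+_{i,i+1}$ shared with its two cycle neighbors. Thus $N^+(C) = \{v^+_{i,i+1} : 1 \le i \le m\}$, and these $m$ vertices are distinct by Proposition~\ref{prop:component_cycle_neighbor}(i); the computation for $N^-(C)$ is symmetric. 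The identity $|N^+(C)\cup N^-(C)| + |N^+(C)\cap N^-(C)| = 2m$ then shows the two claimed inequalities are equivalent, so it suffices to prove $|N^+(C)\cap N^-(C)| \le m-3$.

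To this end, I would describe $N^+(C)\cap N^-(C)$ concretely: each $w$ in it can be written uniquely as $w = v^+_{i,i+1} = v^-_{j,j+1}$, and the $(2,2)$ degree bounds, together with the fact that $v_i, v_{i+1}$ are predators and $v_j, v_{j+1}$ are prey of $w$, force $N^-_D(w) = \{v_i, v_{i+1}\}$ and $N^+_D(w) = \{v_j, v_{j+1}\}$. Setting $I = \{i : v^+_{i,i+1} \in N^-(C)\}$, the map $w \mapsto i$ is a bijection from $N^+(C)\cap N^-(C)$ to $I$, so the task reduces to showing $|I| \le m-3$. Now I would fix a topological order $r$ of $D$ (which exists because $D$ is acyclic) and let $v_t$ and $v_{t'}$ be the cycle vertices with the largest and the second-largest value of $r$, respectively (so $t \ne t'$). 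For each $i \in I$ with its associated $j$, the length-two arcs $v_i, v_{i+1} \to w \to v_j, v_{j+1}$ yield $r(v_i), r(v_{i+1}) < r(v_j), r(v_{j+1})$: if $v_i$ or $v_{i+1}$ equaled $v_t$, this would contradict the maximality of $r(v_t)$ on $V(C)$, and if either equaled $v_{t'}$, then both $v_j$ and $v_{j+1}$ would have $r$-value strictly exceeding $r(v_{t'})$, leaving $v_t$ as their only possible cycle-candidate and forcing $v_j = v_{j+1}$, which is absurd.

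Therefore $i \notin \{t-1, t, t'-1, t'\}$ for every $i \in I$. Since $t \ne t'$ and $m \ge 3$, the pairs $\{t-1,t\}$ and $\{t'-1,t'\}$ (indices read modulo $m$) are distinct two-element sets that share at most one element, so their union has size at least three. Consequently $|I| \le m - 3$, as required. The main obstacle is squeezing out the third forbidden index beyond the obvious $\{t-1, t\}$ coming from the single maximum; the key point is that each $w$ in the intersection has two prey on the cycle, so two cycle vertices must have rank strictly above $r(v_i)$, which is exactly why the second-largest rank must also be excluded and why the bound improves from $m-2$ to $m-3$.
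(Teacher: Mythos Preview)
Your argument is correct and takes a genuinely different route from the paper. The paper argues by contradiction: assuming $|N^+(C)\setminus N^-(C)|\le 2$, it picks any $x_1\in N^+(C)\cap N^-(C)$, writes $x_1=v^-_{j,j+1}$, observes that among the three vertices $v^+_{j-1,j},v^+_{j,j+1},v^+_{j+1,j+2}$ at least one lies again in $N^+(C)\cap N^-(C)$, and iterates to produce a directed walk visiting $m+1$ vertices of that intersection; pigeonhole then yields a closed directed walk, contradicting acyclicity. Your approach is direct: you fix a topological order, let $v_t,v_{t'}$ be the two highest-ranked cycle vertices, and show that for any $i\in I$ the two prey $v_j,v_{j+1}$ of $w=v^+_{i,i+1}$ both outrank $v_i,v_{i+1}$, which forces $i\notin\{t-1,t,t'-1,t'\}$. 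The paper's walk-building is a bit more flexible in spirit (it only uses that a long enough walk in an acyclic digraph must revisit a bounded set), while your argument is shorter and explains transparently \emph{why} the bound is $m-3$ rather than $m-2$: each vertex in the intersection has \emph{two} prey on $C$, so both of the top two ranks must be avoided by $\{v_i,v_{i+1}\}$, and the two forbidden index-pairs $\{t-1,t\}$ and $\{t'-1,t'\}$ can overlap in at most one place when $m\ge 3$.
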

\begin{proof}
 Let $C:=C_{v,m}$.
 By Proposition~\ref{prop:component_cycle_neighbor}(i) and Proposition~\ref{prop:(2,2)},
 \[N^+(C)=\{v^+_{1,2},v^+_{2,3},\ldots,v^+_{m,1}\} \text{ and } N^-(C)=\{v^-_{1,2},v^-_{2,3},\ldots,v^-_{m,1}\}\]
 with
 \[
|N^+(C)|=|N^-(C)|=m.
\]
Suppose, to the contrary, that
$|N^+(C) \cup N^-(C) | \leq m+2$.
Then, as we have shown $| N^+(C)|=|N^-(C)|=m$, 
\[|N^+(C)\cap N^-(C)| \geq m-2
\]
and so \begin{equation}\label{eq:lem:cycle-char-01}|N^+(C)- N^-(C) |\leq 2.
 \end{equation}	
Take a vertex $x_1$ in $N^+(C)\cap N^-(C)$.
Then $x_1=v^-_{j,j+1}$ for some $j \in \{1,\ldots,m\}$.
Let $u_i=v^+_{i,i+1}$ for each $1\le i \le m$ ($u_{t+m}=u_t$ for any positive integer $t$).
Since $\{u_{j-1},u_{j},u_{j+1}\} \subseteq N^+(C)$, 
at least one of $u_{j-1},u_{j},u_{j+1}$ belongs to $N^+(C)\cap N^-(C)$ by \eqref{eq:lem:cycle-char-01}.
Let $x_2$ be one of such vertices.
Then,
since $v_j\to u_{j-1}$, $v_j \to u_{j}$, and $v_{j+1} \to u_{j+1}$,
we obtain an $(x_1,x_2)$-directed walk $W_1$.
By similar argument,
we obtain an $(x_2,x_3)$-directed walk $W_2$ for some $x_3 \in N^+(C)\cap N^-(C)$.
By repeating this process,
we obtain the directed walk $W:=W_1 \to W_2 \to \cdots \to W_m$ where $W_i$ is an $(x_i,x_{i+1})$-directed walk and $x_i \in N^+(C) \cap N^-(C)$ for each $1\le i \le m$.
Then each of $x_1,\ldots,x_{m+1}$ belongs to $N^+(C)\cap N^-(C)$.
By the way, since $|N^+(C)\cap N^-(C)| \leq m$, 
$x_k=x_\ell$ for some distinct $k,\ell \in \{1, \ldots, m+1\}$.
Thus $W$ contains a closed directed walk, which contradicts the fact that $D$ is acyclic.
Hence
$|N^+(C) \cup N^-(C) | \geq m+3$.
Then, since $|N^+(C)|=|N^+(C)|=m$,
$|N^+(C) \cap N^-(C) | \leq m-3$.
Therefore the statement is true.
\end{proof}

\begin{theorem}\label{thm:cycle_component}
Let $\mathcal{G}_{\ell}$ be the set of graphs having the minimum number of components among $(2,2)$ CCE graphs containing a cycle of length $\ell \geq 3$ and $G_{\ell}$ be a graph in $\mathcal{G}_{\ell}$ of the smallest order.
Then the following are true:
\begin{enumerate}[(i)]
\item $G_{\ell}$ contains at least six isolated vertices;
\item The graph $G_3$ is the only graph  in $\bigcup_{\ell=3}^{\infty}\mathcal{G}_{\ell}$ with a unique nontrivial component that is a cycle and exactly six isolated vertices.
\end{enumerate}
\end{theorem}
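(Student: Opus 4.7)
I will let $D$ be a minimal $(2,2)$ digraph with $CCE(D) = G_\ell$ and write $C = C_{v,\ell}$ for the cycle of length $\ell$. By Proposition~\ref{prop:(2,2)} no arc of $D$ joins two vertices of $C$, so $N^+(C) \cup N^-(C)$ is disjoint from $V(C)$; Lemma~\ref{lem:cycle-char} then gives $|N^+(C) \cup N^-(C)| \geq \ell+3$ and $|N^+(C) \cap N^-(C)| \leq \ell-3$. Writing $S := N^+(C) \cap N^-(C)$, this yields $|(N^+(C) \cup N^-(C)) \setminus S| \geq 6$ and the order bound
\[
|V(G_\ell)| \;\geq\; |V(C) \cup N^+(C) \cup N^-(C)| \;=\; \ell + 2\ell - |S| \;\geq\; 2\ell + 3,
\]
with equality forcing $|S| = \ell - 3$ and $V(G_\ell) = V(C) \cup N^+(C) \cup N^-(C)$.

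Part~(ii) is immediate from this order bound. Suppose $G \in \bigcup_{\ell \geq 3} \mathcal{G}_\ell$ has a unique nontrivial component equal to a cycle $C_m$ together with exactly six isolated vertices, so $|V(G)| = m+6$. Combining with $|V(G)| \geq 2m+3$ forces $m \leq 3$, hence $m = 3$ and $G = C_3 \cup 6K_1$. A direct construction on nine vertices with arcs $v_i, v_{i+1} \to v^+_{i,i+1}$ and $v^-_{i,i+1} \to v_i, v_{i+1}$ for $i \in \{1,2,3\}$ (indices modulo $3$) realizes $C_3 \cup 6K_1$ as a $(2,2)$ CCE graph, and since it attains the bound $|V| = 2\ell + 3 = 9$, it is of smallest order in $\mathcal{G}_3$, identifying $G$ with $G_3$.

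For part~(i), I would show that in the smallest-order $G_\ell$ every vertex of $(N^+(C) \cup N^-(C)) \setminus S$ is isolated. Fix $w \in N^+(C) \setminus S$, say $w = v^+_{i,i+1}$. By Proposition~\ref{prop:component_cycle_neighbor}(iii) any $G$-neighbor of $w$ lies in $\{v^+_{i-1,i}, v^+_{i+1,i+2}\}$. Because $w \notin S$, the prey of $w$ cannot lie in $V(C)$; the $(2,2)$ indegree cap on $N^+(C)$-vertices and the acyclicity of $D$ prevent them from lying in $N^+(C) \cup N^-(C)$ either, so every prey of $w$ lies in $T := V(D) \setminus (V(C) \cup N^+(C) \cup N^-(C))$. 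Any hypothetical $G$-edge $ww'$ with $w' \in N^+(C) \setminus S$ thus requires a common prey $t \in T$, which by Proposition~\ref{prop:minimal_property}(iii) is unique. A case analysis using Proposition~\ref{prop:minimal_property} shows that $t$ (and any cascade of further witnesses it might trigger) strictly increases $|V(G_\ell)|$ beyond $|V(C) \cup N^+(C) \cup N^-(C)|$ without decreasing the component count, contradicting the equality case of the order bound that smallest order forces. Hence $T = \emptyset$, no such edge exists, and every vertex of $N^+(C) \setminus S$ is isolated in $G_\ell$. Proposition~\ref{prop:reverse_property_minimal} gives the symmetric conclusion for $N^-(C) \setminus S$, producing at least $2(\ell - |S|) \geq 6$ isolated vertices in $G_\ell$.

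The main obstacle is the accounting step in part~(i): one must show that each edge among $(N^+(C) \cup N^-(C)) \setminus S$ in $G_\ell$ forces at least one net new vertex in $T$, even allowing for scenarios in which the witness $t$ is itself non-isolated through further recycled structure. Minimality of $D$ (Proposition~\ref{prop:minimal_property}) together with acyclicity preclude such recycling without additional fresh vertices, so smallness of order eliminates the scenario. With this accounting in place, both parts reduce to the single order inequality $|V(G_\ell)| \geq 2\ell + 3$.
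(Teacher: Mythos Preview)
Your treatment of part~(ii) via the order bound $|V(G_\ell)|\ge 2\ell+3$ matches the paper's and is fine once part~(i) is available; note that you do need~(i) to place your nine-vertex construction inside $\mathcal{G}_3$, since membership in $\mathcal{G}_3$ is about minimum component count, not merely small order.

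For part~(i) your route diverges from the paper's and contains a real gap. The paper never argues that the particular vertices of $(N^+(C)\cup N^-(C))\setminus S$ are isolated. Instead it shows directly that a minimal digraph $D$ of $G_\ell$ has at least three sinks and at least three sources (hence six isolated vertices, disjoint because each sink has a predator). The key steps are: minimality together with the minimum-component hypothesis forces every sink to have exactly two predators lying consecutively on a cycle of $G_\ell$; then if $D$ had only one or two sinks, deleting them (and one further carefully chosen vertex in the two-sink case) would leave an acyclic digraph with no sink, a contradiction. The component-minimality of $G_\ell$ is used essentially here, to rule out sinks whose predators sit on a path component.

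Your argument instead tries to leverage smallest order to force $T=\emptyset$, and this is where it breaks. You assert that an edge among $(N^+(C)\cup N^-(C))\setminus S$ would push $|V(G_\ell)|$ strictly beyond $|V(C)\cup N^+(C)\cup N^-(C)|$, ``contradicting the equality case of the order bound that smallest order forces.'' But nothing you have proved forces equality: that would require exhibiting, for every $\ell$, a graph in $\mathcal{G}_\ell$ on exactly $3\ell-|S|$ vertices, and you have not done so (nor is it clear this holds for $\ell\ge 4$). The natural salvage---delete the vertices of $T$ and argue the resulting CCE graph still contains $C_\ell$ with no more components---does not work, because deleting a witness $t\in T$ can destroy an edge of $G_\ell$ and thereby \emph{increase} the component count, so the trimmed graph need not lie in $\mathcal{G}_\ell$. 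There is also a smaller unjustified step: acyclicity alone does not prevent a prey of $w\in N^+(C)\setminus S$ from lying in $N^-(C)$, since the path $v_i\to w\to u\to v_j$ is not by itself a closed walk.
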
 
\begin{proof}
Fix an integer $\ell \geq 3$.
For notational convenience, we simply write $G$ for $G_\ell$.
Let $D$ be a minimal digraph of $G$.
Take a sink $x$ in $D$.
Then $x$ is isolated in $CCE(D)$ and so, by Proposition~\ref{prop:minimal_property}(i), $x$ cannot have indegree $1$.
If $x$ has indegree $0$, then $CCE(D-x)$ is a graph having less components than $G$ and $CCE(D-x)$ still has a cycle of length $\ell$, which contradicts the choice of $G$.
Thus $x$ has indegree $2$.
Hence the predators of $x$ are adjacent in $G$ by Proposition~\ref{prop:minimal_property}(ii).
Suppose that the predators of $x$ lie on a path component $P$ in $G$.
Then the predators have no common prey in $D-x$ by Proposition~\ref{prop:minimal_property}(iii) and so they are not adjacent in $CCE(D-x)$.
Therefore the component $P$ breaks up into two pieces in $CCE(D-x)$ while one component disappears by deleting $x$.
Thus $CCE(D-x)$ has the same number of components as $CCE(D)$.
By the way, $CCE(D-x)$ still has a cycle of length $\ell$, which contradicts the choice of $G$.
Therefore the predators of $x$ lie on a cycle component in $G$.
 Since $x$ was arbitrarily chosen,
 we conclude that
\statement{sta:thm:cycle_component_1}{each sink in $D$ has two predators which are consecutive vertices on a cycle in $G$.}
Thus each predator of a sink has outdegree $2$ by Proposition~\ref{prop:property_degree2}(i) and so 
\statement{sta:thm:cycle_component_2}{each predator of a sink has a prey distinct from the sink in $D$.} 
If $D$ has exactly one sink $x$,
then $D-x$ has no sink by \eqref{sta:thm:cycle_component_2} and so $D-x$ is not acyclic, which is impossible.
Thus $D$ has at least two sinks.
By the way, we may show that $D$ has at least three sinks. 
To show it by contradiction, suppose that $x$ and $x'$ are the only sinks in $D$.
 If there is no common predator of $x$ and $x'$, then $D-\{x,x'\}$ has no sink by \eqref{sta:thm:cycle_component_2}, which is impossible.
 Thus there exists a common predator $y$ of $x$ and $x'$.
 Then $y$ lies on a cycle in $G$ by \eqref{sta:thm:cycle_component_1}.
Thus $N^-_D(x)=\{y,y'\}$ and
 $N^-_D(x')=\{y,y''\}$ where $y'yy''$ is a section of the cycle.
Note that $y'$ and $y''$ are distinct by Proposition~\ref{prop:minimal_property}(iii). 
Since $yy'$ and $yy''$ are edges of $G$,
$y$ has two predators $z_1$ and $z_2$ such that \[N^+_D(z_1)=\{y,y'\}\quad  \text{and} \quad N^+_D(z_2)=\{y,y''\}.\]
By the assumption that $x$ and $x'$ are the only sinks in $D$, the sinks of $D':=D-\{x,x',y\}$
 belong to $N^-_D(x)\cup N^-_D(x')\cup N^-_D(y)-\{x,x',y\}=\{y',y'',z_1,z_2\}$.
 However, none of these can be a sink of $D'$.
 For, it is easy to check that $z_1$ and $z_2$ are not sinks of $D'$.
 Since each of $y'$ and $y''$ has degree $2$ in $G$,
 each of $y'$ and $y''$ has outdegree $2$ by Proposition~\ref{prop:property_degree2}(i) and so has a prey not belonging to $\{x,x'\}$.
 Moreover, $y' \not \to y$ and $y'' \not \to y$ by Proposition~\ref{prop:(2,2)}.
 Therefore $y'$ and $y''$ are not sinks in $D'$ and so $D'$ has no sinks, which is impossible.
 Thus $D$ has at least three sinks.
 Hence $D$ has at least three sources by Proposition~\ref{prop:reverse_property_minimal}. 
By \eqref{sta:thm:cycle_component_1}, each sink of $D$ is not a source.
Therefore $D$ has at least three sinks and at least three sources.
Thus $G$ has at least $6$ isolated vertices and so part (i) is true.
Let $C$ be a cycle of length $\ell$ in $G$.
Then
\statement{sta:thm:cycle_component_3}{$C \cup 6P_1$ is an induced subgraph of $G$.} 
Since
$V(C) \cap (N^+(C)\cup N^-(C))=\emptyset$ by Proposition~\ref{prop:(2,2)}, $|V(G)|\geq |V(C)| + |N^+(C)\cup N^-(C)|$.
Then, since $|N^+(C)\cup N^-(C)| \geq \ell+3$ by Lemma~\ref{lem:cycle-char},
\[|V(G)|\geq |V(C)| + |N^+(C)\cup N^-(C)| \geq 2\ell+3.\]
 Therefore $G_3\cong C_3 \cup 6P_1$ by \eqref{sta:thm:cycle_component_3} since $CCE(D_5)\cong C_3 \cup 6P_1$ (see the digraph $D_5$ and its CCE graph $CCE(D_5)$ given in Figure~\ref{fig:interval}).
Suppose $\ell\geq 4$.
Then
\[|V(G)| \geq 2\ell+3 > \ell+6=|V(C)|+|V(6P_1)|\] and so $G$ must contain a component not belonging to $C\cup 6P_1$.
Thus part (ii) is true.
\end{proof}

			\begin{figure}
			\begin{center}
			\subfigure[$D_5$]{
\resizebox{0.3\textwidth}{!}{%
\begin{tikzpicture}[scale=1] \tikzset{mynode/.style={inner sep=2pt,fill,outer sep=2.3pt,circle}}
            \node [mynode] (u12) at (-1.5,1) [label=left :$u_1$] {};
			\node [mynode] (u23) at (0,1) [label=above :$u_2$] {};
			\node [mynode] (u31) at (1.5,1) [label=right :$u_3$] {};
		    \node [mynode] (u1) at (-1.5,0) [label=left :$v_1$] {};
		    \node [mynode] (u2) at (0,0) [label=right :$v_2$] {};
		    \node [mynode] (u3) at (1.5,0) [label=right :$v_3$] {};
		    \node [mynode] (u+12) at (-1.5,-1) [label=left :$x_1$] {};
		    \node [mynode] (u+23) at (0,-1) [label=below:$x_2$] {};
			\node [mynode] (u+31) at (1.5,-1) [label=right :$x_3$] {};		

\draw[->, thick] (u1) edge[] (u12);
\draw[->, thick] (u1) edge[] (u31);
\draw[->, thick] (u2) edge[] (u12);
\draw[->, thick] (u2) edge[] (u23);
\draw[->, thick] (u3) edge[] (u23);
\draw[->, thick] (u3) edge[] (u31);

\draw[->, thick] (u+12) edge[] (u1);
\draw[->, thick] (u+12) edge[] (u2);
\draw[->, thick] (u+23) edge[] (u2);
\draw[->, thick] (u+23) edge[] (u3);
\draw[->, thick] (u+31) edge[] (u1);
\draw[->, thick] (u+31) edge[] (u3);

			\end{tikzpicture}
}
}
\hspace{1.47cm}
\subfigure[$CCE(D_5)$]{
\resizebox{0.3\textwidth}{!}{%
\begin{tikzpicture}[scale=1] \tikzset{mynode/.style={inner sep=2pt,fill,outer sep=2.3pt,circle}}
			\node [mynode] (u12) at (-1.5,1) [label=left :$u_1$] {};
			\node [mynode] (u23) at (0,1) [label=above :$u_2$] {};
			\node [mynode] (u31) at (1.5,1) [label=right :$u_3$] {};
		    \node [mynode] (u1) at (-1.5,0) [label=left :$v_1$] {};
		    \node [mynode] (u2) at (0,0) [label=below :$v_2$] {};
		    \node [mynode] (u3) at (1.5,0) [label=right :$v_3$] {};
		    \node [mynode] (u+12) at (-1.5,-1) [label=left :$x_1$] {};
		    \node [mynode] (u+23) at (0,-1) [label=below:$x_2$] {};
			\node [mynode] (u+31) at (1.5,-1) [label=right :$x_3$] {};	

\draw[thick] (u1) edge[] (u2);
\draw[thick] (u2) edge[] (u3);
\draw[thick] (u3) to [in=45, out=135, distance=0.5cm] (u1);
			\end{tikzpicture}
}
}
			\end{center}
			\caption{Digraphs and its CCE graphs in the proof of Theorem~\ref{thm:cycle_component}}
			\label{fig:interval}
			\end{figure}
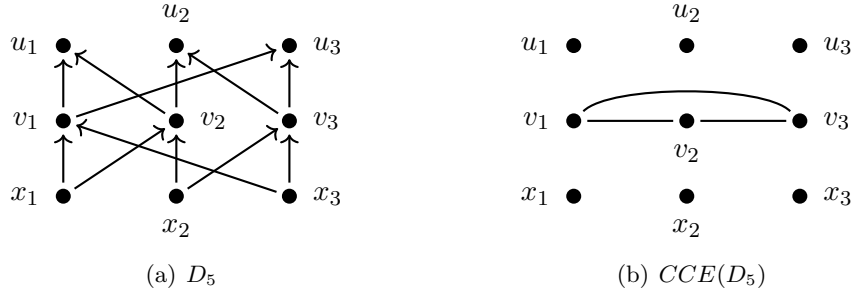

\begin{proof}[A proof of Theorem~\ref{thm:interval}]
Suppose that
$G$ is not interval.
Then, by Proposition~\ref{prop:degree},
$G$ contains a cycle component of length $\ell \geq 4$.
Let $\mathcal{G}_{\ell}$ be the set of graphs having the least components among the $(2,2)$ CCE graphs containing a cycle of length $\ell$ and $G_{\ell}$ be a graph in $\mathcal{G}_{\ell}$ with the least order.
Then, by (i) and (ii) of Theorem~\ref{thm:cycle_component},
$G_{\ell}$ contains at least eight components.
Thus $t \geq 8$. 
Therefore we have shown that any $(2,2)$ CCE graph with at most seven components is interval.
We note that
for the digraph $D_6$ given in Figure~\ref{fig:interval},
$CCE(D_6) \cong C_4 \cup 7P_1$, which has eight components.
Since $CCE(D_6)$ is not an interval graph, the inequality is tight.
\end{proof}

	\begin{figure}
			\begin{center}
\subfigure[$D_6$]{
\resizebox{0.4\textwidth}{!}{%
\begin{tikzpicture}[scale=0.8] 
\tikzset{mynode/.style={inner sep=2pt,fill,outer sep=2.3pt,circle}}

			\node [mynode] (u12) at (-1.5,1) [label=left :$u_1$] {};
			\node [mynode] (u23) at (0,1) [label=above :$u_2$] {};
			\node [mynode] (u31) at (1.5,1) [label=right :$u_3$] {};
		    \node [mynode] (u1) at (-3,0) [label=left :$v_1$] {};
		    \node [mynode] (u2) at (-1.5,0) [label=left :$v_2$] {};
		    \node [mynode] (u3) at (1.5,0) [label=right :$v_3$] {};
		    \node [mynode] (u4) at (3,0) [label=right :$v_4$] {};
		    \node [mynode] (w) at (0,0) [label=below :$w$] {};
		    \node [mynode] (u+12) at (-1.5,-1) [label=left :$x_1$] {};
		    \node [mynode] (u+23) at (0,-1) [label=below:$x_2$] {};
			\node [mynode] (u+31) at (1.5,-1) [label=right :$x_3$] {};

\draw[->,thick] (u1) to [in=135, out=90, distance=0.5cm](w);
 \draw[->,thick] (u4) to [in=45, out=90, distance=0.5cm](w);
	  \draw[->,thick] (w) edge[](u2);
	    \draw[->,thick] (w) edge[](u3);

\draw[->, thick] (u1) edge[] (u12);
\draw[->, thick] (u2) edge[] (u12);
\draw[->, thick] (u2) edge[] (u23);
\draw[->, thick] (u3) edge[] (u23);
\draw[->, thick] (u3) edge[] (u31);
\draw[->, thick] (u4) edge[] (u31);
\draw[->, thick] (u+12) to [in=305, out=155, distance=0.5cm] (u1);
\draw[->, thick] (u+12) edge[] (u2);

\draw[->, thick] (u+23) edge[] (u1);
\draw[->, thick] (u+23) edge[] (u4);

\draw[->, thick] (u+31)  to [in=235, out=25, distance=0.5cm] (u4);
\draw[->, thick] (u+31) edge[] (u3);

			\end{tikzpicture}
}
}
\subfigure[$CCE(D_6)$]{
\resizebox{0.4\textwidth}{!}{%
\begin{tikzpicture}[scale=0.8] 
\tikzset{mynode/.style={inner sep=2pt,fill,outer sep=0.5pt,circle}}
			\node [mynode] (u12) at (-1.5,1) [label=left :$u_1$] {};
			\node [mynode] (u23) at (0,1) [label=above :$u_2$] {};
			\node [mynode] (u31) at (1.5,1) [label=right :$u_3$] {};
		    \node [mynode] (u1) at (-3,0) [label=left :$v_1$] {};
		    \node [mynode] (u2) at (-1.5,0) [label=above :$v_2$] {};
		    \node [mynode] (u3) at (1.5,0) [label=above :$v_3$] {};
		    \node [mynode] (u4) at (3,0) [label=right :$v_4$] {};
		    \node [mynode] (w) at (0,0) [label=right :$w$] {};
		    \node [mynode] (u+12) at (-1.5,-1) [label=left :$x_1$] {};
		    \node [mynode] (u+23) at (0,-1) [label=below:$x_2$] {};
			\node [mynode] (u+31) at (1.5,-1) [label=right :$x_3$] {};

\draw[thick] (u1) edge[] (u2);
\draw[thick] (u2) to [in=135, out=45, distance=0.5cm] (u3);
\draw[thick] (u3) edge[] (u4);
\draw[thick] (u4) to [in=315, out=225, distance=0.5cm]  (u1);
			\end{tikzpicture}
}
}
			\end{center}
			\caption{Digraphs and its CCE graphs in the proof of Theorem~\ref{thm:interval}}
			\label{fig:interval}
			\end{figure}
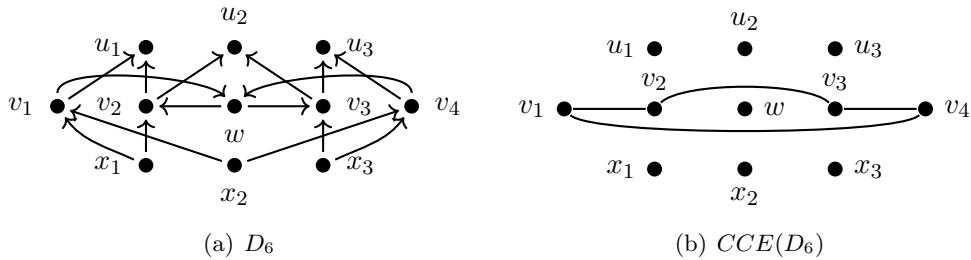

%
 \vspace{3mm}
\noindent {\bf Funding Information} 
This work was supported by Science Research Center Program through the National Research Foundation of Korea(NRF) grant funded by the Korean Government (MSIT)(NRF-2022R1A2C1009648 and NRF-2017R1E1A1A03070489).
 
 \vspace{3mm}
\noindent {\bf Data Availability Statement} Data sharing is not applicable to this article as no datasets were generated or analyzed during the current study.

\section*{Declarations}

\noindent{\bf Conflict of interest}
The authors have no relevant financial or non-financial interests to disclose. The authors declare that they have no conflict of interest.

\end{document}